\newcommand{\mini}{\mathop{\rm minimize}}
\newtheorem{theorem}{Theorem}
\newtheorem{proposition}{Proposition}
\newtheorem{lemma}{Lemma}
\newtheorem{remark}{Remark}
\newtheorem{example}{Example}
\title{Convergence analysis of a regularized Newton method with generalized regularization terms for convex optimization problems}
\author{Yuya Yamakawa\footnote{Graduate School of Informatics, Kyoto University, Yoshidahommachi, Sakyo-ku, Kyoto-shi, Kyoto 606-8501, Japan, E-mail: yuya@i.kyoto-u.ac.jp} and Nobuo Yamashita\footnote{Graduate School of Informatics, Kyoto University, Yoshidahommachi, Sakyo-ku, Kyoto-shi, Kyoto 606-8501, Japan, Email: nobuo@kyoto-u.ac.jp}}
\date{July 10, 2024}
\begin{document}

\maketitle

{\small
{\bf Abstract.}
This paper presents a regularized Newton method (RNM) with generalized regularization terms for unconstrained convex optimization problems. The generalized regularization includes quadratic, cubic, and elastic net regularizations as special cases. Therefore, the proposed method serves as a general framework that includes not only the classical and cubic RNMs but also a novel RNM with elastic net regularization. We show that the proposed RNM has the global $\mathcal{O}(k^{-2})$ and local superlinear convergence, which are the same as those of the cubic RNM. 
\\
\par
{\bf Keywords.}
unconstrained convex optimization, regularized Newton method, generalized regularization, global $\mathcal{O}(k^{-2})$ convergence, superlinear convergence, local convergence
}

\section{Introduction} \label{sec:intro}
We consider the following unconstrained convex optimization problem:
\begin{align} \label{problem:convex}
\begin{aligned}
& \displaystyle \mini_{x \in \mathbb{R}^{n}} & & f(x),
\end{aligned}
\end{align}
where the function $f$ is twice continuously differentiable and convex on $\mathbb{R}^{n}$. 
\par
Newton's method is one of the most well-known and basic iterative methods for solving unconstrained convex optimization problems. Each iteration computes a search direction $d_{k}$, which is a solution of the following subproblem:
\begin{align*}
\begin{aligned}
& \displaystyle \mini_{d \in \mathbb{R}^{n}} & & \langle \nabla f(x_{k}), d \rangle + \frac{1}{2} \langle \nabla^{2} f(x_{k}) d, d \rangle,
\end{aligned}
\end{align*}
and updates the current point $x_{k}$ as $x_{k+1} \coloneqq x_{k} + t_{k} d_{k}$, where $t_{k} > 0$ denotes a step size. It converges rapidly thanks to the use of the second-order information, that is, $\nabla^{2} f(x_{k})$, of the objective function. However, it requires that $\nabla^{2} f(x_{k})$ is nonsingular at each iteration. Even if the Hessian is nonsingular, the convergence rate may be reduced to linear when the Hessian is close to singular. Several variants of Newton's method have been proposed to overcome these drawbacks including regularized Newton methods (RNMs)~\cite{DoMiNe24,LiFuQiYa04,LiLi09,Po09,Mi23}, cubic RNMs~\cite{NePo06,Ne08,YuZhSo19}, and so forth~\cite{GoReBa20,GrLaLu86,RoNe21,CrRo20,DoRi18,GoKoLiRi19,HaDoRiNe20,MaBaRu19,RoKr16,SoMiMoDeGu20}.
\par
RNMs can be considered modifications of Newton's method because they improve the subproblem of Newton's method such that it can be solved even if the Hessian matrix is singular. More precisely, RNMs iteratively solve the following subproblem to find a search direction $d_{k}$:
\begin{align*}
\begin{aligned}
& \displaystyle \mini_{d \in \mathbb{R}^{n}} & & \langle \nabla f(x_{k}), d \rangle + \frac{1}{2} \langle \nabla^{2} f(x_{k}) d, d \rangle + \frac{\mu_{k}}{2} \Vert d \Vert^{2},
\end{aligned}
\end{align*}
where $\mu_{k} > 0$ denotes a parameter. Since the objective function is strongly convex, the subproblem has a unique optimum. Nesterov and Polyak~\cite{NePo06} proposed an RNM with the cubic regularization $\frac{\mu_{k}}{6} \Vert d \Vert^{3}$. The proposed method is called the cubic RNM and iteratively solves the following subproblem:
\begin{align*}
\begin{aligned}
& \displaystyle \mini_{d \in \mathbb{R}^{n}} & & \langle \nabla f(x_{k}), d \rangle + \frac{1}{2} \langle \nabla^{2} f(x_{k}) d, d \rangle + \frac{\mu_{k}}{6} \Vert d \Vert^{3},
\end{aligned}
\end{align*}
For classical and cubic RNMs, global ${\cal O}(k^{-2})$ and local superlinear convergence were proven in~\cite{LiFuQiYa04,Po09,NePo06,Mi23}.
\par
For least squares problems, Ariizumi, Yamakawa, and Yamashita~\cite{ArYaYa24} recently proposed a Levenberg-Marquardt method (LMM) equipped with a generalized regularization term and showed its global and local superlinear convergence. Although a subproblem of the classical LMM has a quadratic regularization term $\frac{\mu_{k}}{2} \Vert d \Vert^{2}$, they generalized the regularization term such that another regularization can be adopted, such as the $L^{1}$ and elastic-net regularization. Moreover, they reported numerical experiments in which their LMM with the elastic-net regularization worked well for certain examples, owing to the sparsity of the search direction.
\par
Inspired by Ariizumi, Yamakawa, and Yamashita~\cite{ArYaYa24}, we propose a generalized RNM (GRNM) for solving problem~\eqref{problem:convex}. We adopt new regularization terms provided as $\frac{\mu_{k}}{p} \Vert d \Vert_{2}^{p} + \rho_{k} \Vert d \Vert_{1}$, where $p \in (1,3]$ is a pre-fixed parameter. With the addition of the new regularization terms, the GRNM includes classical and cubic RNMs as well as novel RNMs with other regularizations, such as the elastic net and so forth. More precisely, if $p=2$ and $\rho_{k} = 0$, the GRNM is reduced to the classical RNM; if $p=3$ and $\rho_{k} = 0$, it is equivalent to the cubic RNM. Moreover, if $p=2$ and $\rho_{k} > 0$, it can be regarded as a novel RNM with the elastic-net regularization.
\par
The contributions of this study are as follows. This study provides
\begin{description}
\item[(i)] the generalized RNM stated above;

\item[(ii)] sufficient conditions of $p$, $\mu_{k}$, and $\rho_{k}$ for which the GRNM has the global $\mathcal{O}(k^{-2})$ convergence;

\item[(iii)] local superlinear convergence under the local error bound condition.
\end{description}
Hence, these contributions include classical and cubic RNMs as special cases and provide a framework for new RNMs such as the elastic-net RNM.
\par
The remainder of this paper is organized as follows. Section~\ref{sec:preliminary} provides a general proposition that plays an important role in the analysis of global $\mathcal{O}(k^{-2})$ convergence. Section~\ref{sec:GRNM} describes the proposed method. Section~\ref{sec:global_convergence} presents global $\mathcal{O}(k^{-2})$ convergence of the proposed method. Section~\ref{sec:local_convergence} proves local and superlinear convergence. Finally, concluding remarks are presented in Section~\ref{sec:conclusion}.
\par
Throughout the paper, we use the following mathematical notation. Let $\mathbb{N}$ be the set of natural numbers (positive integers). For $p \in \mathbb{N}$ and $q \in \mathbb{N}$, the set of real matrices with $p$ rows and $q$ columns is denoted by $\mathbb{R}^{p \times q}$. Note that $\mathbb{R}^{p \times 1}$ is equal to the set of $p$-dimensional real vectors, that is, $\mathbb{R}^{p \times 1} = \mathbb{R}^{p}$, and note that $\mathbb{R}^{1}$ represents the set of real numbers, namely, $\mathbb{R}^{1} = \mathbb{R}$. For any $w \in \mathbb{R}^{p}$, the transposition of $w$ is represented as $w^{\top} \in \mathbb{R}^{1 \times p}$. For $u \in \mathbb{R}^{p}$ and $v \in \mathbb{R}^{p}$, the inner product of $u$ and $v$ is defined by $\left\langle u, v \right\rangle \coloneqq u^{\top} v$. We denote by $I$ the identity matrix, where these dimensions are defined by the context. For each $w \in \mathbb{R}^{p}$, the Euclidean and $L^{1}$ norms of $w$ are respectively defined by $\Vert w \Vert \coloneqq \sqrt{\left\langle w, w \right\rangle}$ and $\Vert w \Vert_{1} \coloneqq |[w]_{1}| + |[w]_{2}| + \cdots + |[w]_{p}|$, where $[w]_{j}$ represents the $j$-th element of $w$. For $W \in \mathbb{R}^{p \times q}$, we denote by $\Vert W \Vert$ the operator norm of $W$, that is, $\Vert W \Vert \coloneqq \sup \{ \Vert W u \Vert ; \Vert u \Vert \leq 1 \}$.
Let $\varphi$ be a function from $\mathbb{R}^{p}$ to $\mathbb{R}$. The gradient of $\varphi$ at $w \in \mathbb{R}^{p}$ is represented as $\nabla \varphi(w)$. The Hessian of $\varphi$ at $w \in \mathbb{R}^{p}$ is denoted by $\nabla^{2} \varphi(w)$. For a convex function $\phi \colon \mathbb{R}^{p} \to \mathbb{R}$, we denote by $\partial \phi(w)$ the subdifferential of $\phi$ at $w$. For $\eta \in \mathbb{R}^{p}$ and $r > 0$, we define $B(\eta, r) \coloneqq \{ \mu \in \mathbb{R}^{p}; \Vert \mu - \eta \Vert \leq r \}$. For infinite sequences $\{ a_{k} \} \subset \mathbb{R}$ and $\{ b_{k} \} \subset \mathbb{R}$, we write $a_{k} = {\cal O}(b_{k})~(k \to \infty)$ if there exist $c > 0$ and $n \in \mathbb{N}$ such that $| a_{k} | \leq c | b_{k} |$ for all $k \geq n$.

\section{Preliminaries} \label{sec:preliminary}

This section presents a general proposition that provides sufficient conditions under which arbitrary sequences generated by optimization methods have global ${\cal O}(k^{-2})$ convergence. This proposition plays a critical role in Section~\ref{sec:global_convergence}. The proof of the proposition is inspired by the technique presented in~\cite[Theorem~1]{Mi23}. 

\begin{proposition} \label{prop:global_convergence}
Let $f \colon \mathbb{R}^{n} \to \mathbb{R}$ be a continuously differentiable convex function with a minimum $x^{\ast} \in \mathbb{R}^{n}$.  Let $\{ x_{k} \}$ be an infinite sequence in $\mathbb{R}^{n}$. Suppose that
\begin{description}
\item[{\rm (i)}] $f(x_{k+1}) \leq f(x_{k})$ for all $k \in \mathbb{N} \cup \{ 0 \}$;

\item[{\rm (ii)}] there exists $\gamma > 0$ such that $\Vert \nabla f(x_{k+1}) \Vert \leq \gamma \Vert \nabla f(x_{k}) \Vert$ for all $k \in \mathbb{N} \cup \{ 0 \}$;

\item[{\rm (iii)}] there exists $\delta > 0$ such that $f(x_{k}) - f^{\ast} \leq \delta \Vert \nabla f(x_{k}) \Vert$ for all $k \in \mathbb{N} \cup \{ 0 \}$;

\item[{\rm (iv)}] there exist $\theta \in (0, 1)$, $\nu > 0$, and $\ell \in \mathbb{N}$ such that $\theta^{k} \leq \nu k^{-2}$ and $(\gamma \theta)^{\frac{k}{2}} \leq \nu k^{-2}$ for all $k \geq \ell$, where $\gamma$ is given in~{\rm (ii)}.
\end{description}
Let ${\cal I}(\theta) \coloneqq \{ i \in \mathbb{N} \cup \{ 0 \}; \theta \Vert \nabla f(x_{i}) \Vert \leq \Vert \nabla f(x_{i+1}) \Vert \}$.
Suppose also that
\begin{description}
\item[{\rm (v)}] there exists $\tau > 0$ such that $f(x_{k+1}) - f(x_{k}) \leq - \tau( f(x_{k}) - f^{\ast} )^{\frac{3}{2}}$ for all $k \in {\cal I}(\theta)$.
\end{description}
Then, one of the following statements holds:
\begin{description}
\item[{\rm (a)}] If $| {\cal I}(\theta) | < \infty$ holds, then
\begin{align*}
f(x_{k}) - f^{\ast} \leq \frac{\theta^{-(\widehat{i}+1)} \nu \delta \Vert \nabla f(x_{\widehat{i} + 1}) \Vert}{k^{2}} \quad \forall k \geq \max \{ \ell, \widehat{i}+2 \},
\end{align*}
where $\widehat{i}$ is the largest element of ${\cal I}(\theta)$.

\item[{\rm (b)}] If $| {\cal I}(\theta) | = \infty$ holds, then
\begin{align*}
f(x_{k}) - f^{\ast} \leq \max \left\{ \frac{36 \tau^{-2}}{(k + 4)^{2}}, \frac{ \nu \delta \Vert \nabla f(x_{0}) \Vert }{k^{2}} \right\} \quad \forall k \geq \ell.
\end{align*}
\end{description}
\end{proposition}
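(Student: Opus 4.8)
The plan is to split into the two cases $|\mathcal{I}(\theta)| < \infty$ and $|\mathcal{I}(\theta)| = \infty$, exactly as in the statement, and to exploit the complementary structure: indices \emph{not} in $\mathcal{I}(\theta)$ are precisely those where the gradient norm contracts by a factor $\theta$, i.e.\ $\Vert \nabla f(x_{i+1}) \Vert < \theta \Vert \nabla f(x_{i}) \Vert$, while indices \emph{in} $\mathcal{I}(\theta)$ enjoy the sharp objective decrease (v). For case (a), suppose $\widehat{i}$ is the largest element of $\mathcal{I}(\theta)$. Then for every $i \geq \widehat{i}+1$ we have $i \notin \mathcal{I}(\theta)$, so $\Vert \nabla f(x_{i+1}) \Vert < \theta \Vert \nabla f(x_{i}) \Vert$; iterating from $\widehat{i}+1$ up to an arbitrary $k$ gives $\Vert \nabla f(x_k) \Vert \leq \theta^{\,k - (\widehat{i}+1)} \Vert \nabla f(x_{\widehat{i}+1}) \Vert = \theta^{-(\widehat{i}+1)} \theta^{k} \Vert \nabla f(x_{\widehat{i}+1}) \Vert$. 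Combining with (iii), $f(x_k) - f^{\ast} \leq \delta \Vert \nabla f(x_k) \Vert \leq \delta \theta^{-(\widehat{i}+1)} \theta^{k} \Vert \nabla f(x_{\widehat{i}+1}) \Vert$, and then invoking $\theta^{k} \leq \nu k^{-2}$ from (iv) for $k \geq \ell$ yields the bound in (a), valid for $k \geq \max\{\ell, \widehat{i}+2\}$ (the $\widehat{i}+2$ ensuring the telescoping product above has at least one factor).

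For case (b), the key quantity to control is the number of ``good'' steps (those in $\mathcal{I}(\theta)$) versus ``contracting'' steps (those not in $\mathcal{I}(\theta)$) among the first $k$ iterations. Fix $k \geq \ell$ and let $m \coloneqq |\mathcal{I}(\theta) \cap \{0, 1, \dots, k-1\}|$ be the number of good steps before $k$; then $k - m$ steps are contracting. On the contracting steps the gradient norm shrinks by $\theta$ each time, so tracking $\Vert \nabla f(x_k) \Vert$ back to $\Vert \nabla f(x_0) \Vert$ and using monotonicity of $\Vert \nabla f \Vert$ up to the factor $\gamma$ on good steps (from (ii)) gives $\Vert \nabla f(x_k) \Vert \leq \gamma^{m} \theta^{\,k-m} \Vert \nabla f(x_0) \Vert$. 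I would then distinguish two regimes by comparing $m$ to $k/2$. If $m \leq k/2$, then $k - m \geq k/2$, and since $\gamma^{m}\theta^{k-m} \le \gamma^{k/2}\theta^{k/2}\theta^{k/2-m}$... more carefully, $\gamma^m \theta^{k-m}$ with $m \le k/2$ is bounded using $\theta < 1$: write it as $(\gamma\theta)^m \theta^{k-2m}$ and if $\gamma\theta \le 1$ this is $\le \theta^{k-2m} \le 1$; the cleaner route is to bound $\Vert\nabla f(x_k)\Vert$ and hence via (iii) the gap by $\delta \nu k^{-2} \Vert \nabla f(x_0)\Vert$ using whichever of the two estimates $\theta^k \le \nu k^{-2}$ or $(\gamma\theta)^{k/2} \le \nu k^{-2}$ from (iv) applies — this produces the second term in the max of (b). If instead $m > k/2$, there are more than $k/2$ good steps, and on each the objective decreases by (v); I would use a standard sequence lemma: from $a_{j+1} \le a_j - \tau a_j^{3/2}$ (applied along the subsequence of good steps, using (i) to ensure the gap does not increase on the other steps) one derives $a_{j} \le 4/(\tau^2 j^2)$-type decay in the number of good steps, and since good steps number more than $k/2$, substituting $j > k/2$ gives $f(x_k) - f^{\ast} \le 36\tau^{-2}/(k+4)^2$ after bookkeeping the constants. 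Taking the maximum over the two regimes gives the claimed bound.

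The main obstacle I anticipate is the case $m > k/2$: making the sequential argument rigorous requires care because the recursion (v) only holds on the subset $\mathcal{I}(\theta)$, not at every step, so one must re-index to the subsequence of good steps, invoke (i) to guarantee that the discarded (contracting) steps do not undo the progress, and then extract the right $O(j^{-2})$ rate from the recursion $a_{j+1} \le a_j - \tau a_j^{3/2}$ with the precise constant $36$ and shift $+4$. The cleanest way to handle the recursion is to pass to $b_j \coloneqq a_j^{-1/2}$, show $b_{j+1} - b_j \ge \tau/2$ (or a comparable lower bound) for $j$ large enough, telescope, and then invert; choosing the threshold for "large enough" and handling small $j$ separately is what produces the $+4$ shift and the factor $36$. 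The contracting-step analysis and case (a) are comparatively routine once the telescoping product for $\Vert \nabla f \Vert$ is set up, with the only subtlety being to apply the correct inequality from (iv) depending on whether $\gamma \le 1$ or $\gamma > 1$.
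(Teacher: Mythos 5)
Your proposal is correct and follows essentially the same route as the paper: case (a) via the telescoped $\theta$-contraction of the gradient norm combined with (iii) and (iv), and case (b) via the dichotomy on whether the number of indices of ${\cal I}(\theta)$ below $k$ exceeds $k/2$, using the recursion $a_{j+1}\le a_j-\tau a_j^{3/2}$ along the subsequence of good steps (with (i) bridging the gaps) in one regime and the bound $\gamma^{m}\theta^{k-m}\le(\gamma\theta)^{k/2}$ (taking $\gamma\ge 1$ without loss of generality) in the other. The only cosmetic difference is that the paper outsources the $O(j^{-2})$ decay of the recursion to \cite[Proposition~1]{Mi23}, whereas you propose proving it inline via $b_j=a_j^{-1/2}$, which is the standard argument behind that cited result.
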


\begin{proof}
Let $i_{\ell} \in \mathbb{N}$ be the $\ell$-th smallest element of ${\cal I}(\theta)$, that is, ${\cal I}(\theta) = \{ i_{1}, i_{2}, \ldots, \widehat{i} \}$ with $i_{\ell} < i_{\ell+1}$. Moreover, regarding assumption~(ii), we suppose $\gamma \geq 1$ without loss of generality.
\par
To begin with, we consider case~(a), that is, $| {\cal I}(\theta)| < \infty$ is satisfied. Let ${\cal J}(\theta) \coloneqq \{ i \in \mathbb{N} \cup \{ 0 \}; \theta \Vert \nabla f(x_{i}) \Vert > \Vert \nabla f(x_{i+1}) \Vert \}$. Note that $\widehat{i} = i_{|{\cal I}(\theta)|}$. We can easily observe that
\begin{align}
k \in {\cal J}(\theta) \quad \forall k > \widehat{i}. \label{k_J_infty}
\end{align}
For every $k \geq \widehat{i} + 2$, it follows from~\eqref{k_J_infty} that $j \in {\cal J}(\theta)$ for any $j \in \{ \widehat{i} + 1, \widehat{i} + 2, \ldots, k-1 \}$, that is,
\begin{align}
\Vert \nabla f(x_{k}) \Vert < \theta \Vert \nabla f(x_{k-1}) \Vert < \cdots < \theta^{k-(\widehat{i}+1)} \Vert \nabla f(x_{\widehat{i}+1}) \Vert. \label{ineq:nab_ff}
\end{align}
Let us take $k \geq \max \{ \ell, \widehat{i}+2 \}$ arbitrarily. By assumption~(iii) and \eqref{ineq:nab_ff}, we obtain
\begin{align}
f(x_{k}) - f^{\ast} \leq \delta \Vert \nabla f(x_{k}) \Vert < \theta^{k-(\widehat{i}+1)} \delta \Vert \nabla f(x_{\widehat{i}+1}) \Vert. \label{ineq:case_a}
\end{align}
Recall that $\theta^{k} \leq \nu k^{-2}$ from assumption~(iv). Thus, the desired inequality is derived from~\eqref{ineq:case_a}.
\par
Next, we discuss the case where $|{\cal I}(\theta)| = \infty$ holds. Let $k \in \mathbb{N} \cup \{ 0 \}$ and define $\psi_{k} \coloneqq \tau^{2}(f(x_{i_{k}}) - f^{\ast})$. Combining assumption~(i) and $i_{k+1} \geq i_{k} + 1$ yields
\begin{align}
\psi_{k+1} = \tau^{2} (f(x_{i_{k+1}}) - f^{\ast}) \leq \tau^{2} ( f(x_{i_{k} + 1}) - f^{\ast} ). \label{ineq:ff_ast}
\end{align}
Since assumption~(v) implies that $f(x_{i_{k}+1}) - f(x_{i_{k}}) \leq -\tau (f(x_{i_{k}}) - f^{\ast})^{\frac{3}{2}}$,
\begin{align}
\tau^{2} (f(x_{i_{k}+1}) - f^{\ast}) \leq  \tau^{2} (f(x_{i_{k}}) - f^{\ast}) - \tau^{3} (f(x_{i_{k}}) - f^{\ast})^{\frac{3}{2}} = \psi_{k} - \psi_{k}^{\frac{3}{2}}. \label{ineq:alpha_k}
\end{align}
Exploiting \eqref{ineq:ff_ast} and \eqref{ineq:alpha_k} derives $\psi_{k+1} \leq \psi_{k} - \psi_{k}^{\frac{3}{2}} \leq \psi_{k} - \frac{2}{3} \psi_{k}^{\frac{3}{2}}$. It then follows from \cite[Proposition~1]{Mi23} that $\psi_{k} \leq 9(k+2)^{-2}$, that is,
\begin{align}
f(x_{i_{k}}) - f^{\ast} \leq \frac{9 \tau^{-2}}{(k+2)^{2}} \quad \forall k \in \mathbb{N} \cup \{ 0 \}. \label{ineq:fxik}
\end{align}
Let ${\cal I}_{k} \coloneqq \{ i \in {\cal I}(\theta); i \leq k \}$. In the following, we assume $k \geq \ell$. There are two possible cases: Case~(1) $|{\cal I}_{k}| \geq \frac{k}{2}$ and Case~(2) $|{\cal I}_{k}| < \frac{k}{2}$. 
\begin{description}
\item[{\rm Case~(1):}] The largest element of ${\cal I}_{k}$ can be represented by $i_{|{\cal I}_{k}|}$, and thus $i_{|{\cal I}_{k}|} \leq k$. This fact, assumption~(i), and~\eqref{ineq:fxik} imply that
\begin{align*}
f(x_{k}) - f^{\ast} \leq f(x_{i_{|{\cal I}_{k}|}}) - f^{\ast} \leq \frac{9 \tau^{-2}}{(|{\cal I}_{k}|+2)^{2}} \leq \frac{36 \tau^{-2}}{(k + 4)^{2}}.
\end{align*}

\item[{\rm Case~(2):}] From assumption~(ii), each $j \in \{ 0, 1, \ldots, k-1 \}$ satisfies
\begin{align} \label{ineq:nabf_gamma_delta}
\begin{aligned}
& \Vert \nabla f(x_{j+1}) \Vert \leq \gamma \Vert \nabla f(x_{j}) \Vert & & {\rm if} ~ j \in {\cal I}_{k-1},
\\
& \Vert \nabla f(x_{j+1}) \Vert < \theta \Vert \nabla f(x_{j}) \Vert & & {\rm if} ~ j \not\in {\cal I}_{k-1}.
\end{aligned}
\end{align}
Combining assumption~(iii) and \eqref{ineq:nabf_gamma_delta} derives
\begin{align}
f(x_{k}) - f^{\ast} \leq \delta \Vert \nabla f(x_{k}) \Vert \leq \delta \gamma^{|{\cal I}_{k-1}|} \theta^{k-|{\cal I}_{k-1}|} \Vert \nabla f(x_{0}) \Vert. \label{ineq:fkfastf0}
\end{align}
Note that $\gamma \geq 1$, $\theta \in (0, 1)$, and $(\gamma \theta)^{\frac{k}{2}} \leq \nu k^{-2}$ hold from assumption~(iv) and $k \geq \ell$. It then follows from $|{\cal I}_{k-1}| \leq |{\cal I}_{k}| < \frac{k}{2}$ that
\begin{align}
\gamma^{|{\cal I}_{k-1}|} \theta^{k-|{\cal I}_{k-1}|} \leq (\gamma \theta)^{\frac{k}{2}} \leq \frac{\nu}{k^{2}}. \label{ineq:gamma_delta}
\end{align}
We have from \eqref{ineq:fkfastf0} and \eqref{ineq:gamma_delta} that
\begin{align*}
f(x_{k}) - f^{\ast} \leq \frac{\nu \delta \Vert \nabla f(x_{0}) \Vert}{k^{2}}.
\end{align*}
\end{description}
Cases~(1) and (2) guarantee that the desired inequality holds when $| {\cal I}(\theta) | = \infty$. Therefore, the assertion is proven.
\end{proof}

\begin{remark}
We discuss sufficient conditions for assumptions~{\rm (i)--(iv)} of Proposition~{\rm \ref{prop:global_convergence}}. Assumptions~{\rm (i)} and {\rm (ii)} would be satisfied for any sequence generated by descent methods. Note that $\gamma$ of item~{\rm (ii)} is allowed to be greater than or equal to $1$. Assumption~{\rm (iii)} is satisfied when $\{ x_{k} \}$ is bounded. Assumption~{\rm (iv)} holds if $\theta \in (0,\gamma^{-1})$ because it implies $\theta^{k} = {\cal O}(k^{-2})$ and $(\gamma \theta)^{\frac{k}{2}} = {\cal O}(k^{-2})$ as $k \to \infty$. From these discussions, we can see that assumption~{\rm (v)} is the key to global ${\cal O}(k^{-2})$ convergence.
\end{remark}

\section{An RNM with generalized regularization terms} \label{sec:GRNM}
In this paper, we consider an RNM with generalized regularization terms that iteratively solves the following subproblem:
\begin{align} \label{subproblem}
\begin{aligned}
& \displaystyle \mini_{d \in \mathbb{R}^{n}} & & \langle \nabla f(x_{k}), d \rangle + \frac{1}{2} \langle \nabla^{2} f(x_{k}) d, d \rangle + \frac{\mu_{k}}{p} \Vert d \Vert^{p} + \rho_{k} \Vert d \Vert_{1},
\end{aligned}
\end{align}
where $\mu_{k} > 0$ and $\rho_{k} \geq 0$ are parameters, and $p \in (1,3]$ is a pre-fixed constant. The proposed method obtains a search direction $d_{k}$ by solving subproblem~\eqref{subproblem}, and the point $x_{k}$ is updated as $x_{k+1} \coloneqq x_{k} + t_{k} d_{k}$, where $t_{k}$ is a step size.
\par
Now, we denote by $\varphi_{k}$ the objective function of subproblem~\eqref{subproblem}, namely,
\begin{align*}
\varphi_{k}(d) \coloneqq \langle \nabla f(x_{k}), d \rangle + \frac{1}{2} \langle \nabla^{2} f(x_{k}) d, d \rangle + \frac{\mu_{k}}{p} \Vert d \Vert^{p} + \rho_{k} \Vert d \Vert_{1}.
\end{align*}
Since the objective function $\varphi_{k}$ has generalized regularization terms $\frac{\mu_{k}}{p} \Vert d \Vert^{p}$ and $\rho_{k} \Vert d \Vert_{1}$, we call the proposed method a generalized RNM (GRNM). 

\begin{remark}
The GRNM includes the quadratic, cubic, and elastic net regularization as special cases. Moreover, it includes a novel regularization in addition to the aforementioned regularization.
\end{remark}

\begin{remark}
The proposed GRNM can adopt the $L^{1}$ regularization term, that is, $\mu_{k} = 0$ and $\rho_{k} > 0$. However, subproblem~\eqref{subproblem} with $\mu_{k} = 0$ might have no global optimum when $\nabla^{2} f(x_{k})$ is not positive definite. Conversely, if $\rho_{k}$ is sufficiently large, the solution becomes $0$. We provide sufficient conditions under which \eqref{subproblem} has nonzero solutions.
\end{remark}

\begin{lemma} \label{lemma:solvable}
Let $x_{k} \in \mathbb{R}^{n}$, $\mu_{k} > 0$, and $\rho_{k} \geq 0$ be given. If $\rho_{k} < \Vert \nabla f(x_{k}) \Vert_{\infty}$, then problem~\eqref{subproblem} has a unique global optimum $d_{k} \not= 0$ that satisfies
\begin{align*}
\nabla f(x_{k}) + (\nabla^{2} f(x_{k}) + \mu_{k} \Vert d_{k} \Vert^{p-2} I) d_{k} + \rho_{k} \eta_{k} = 0
\end{align*}
for some $\eta_{k} \in \partial \Vert d_{k} \Vert_{1}$. Moreover, $d_{k}$ is the descent direction of $f$ at $x_{k}$, that is, $\langle \nabla f(x_{k}), d_{k} \rangle < 0$.
\end{lemma}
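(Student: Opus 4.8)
The plan is to obtain existence and uniqueness of the minimizer from coercivity and strict convexity of $\varphi_{k}$, to read off the stated stationarity identity from subdifferential calculus, and then to use the hypothesis $\rho_{k} < \Vert \nabla f(x_{k}) \Vert_{\infty}$ to rule out $d_{k} = 0$; the descent inequality will then follow by comparing $\varphi_{k}(d_{k})$ with $\varphi_{k}(0) = 0$.

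First I would check that $\varphi_{k}$ has a global minimizer. Since $f$ is convex, $\nabla^{2} f(x_{k})$ is positive semidefinite, so $\tfrac{1}{2}\langle \nabla^{2} f(x_{k}) d, d \rangle \geq 0$; together with $\rho_{k} \Vert d \Vert_{1} \geq 0$ and $\langle \nabla f(x_{k}), d \rangle \geq -\Vert \nabla f(x_{k}) \Vert \, \Vert d \Vert$ this gives $\varphi_{k}(d) \geq \tfrac{\mu_{k}}{p} \Vert d \Vert^{p} - \Vert \nabla f(x_{k}) \Vert \, \Vert d \Vert$, which tends to $+\infty$ as $\Vert d \Vert \to \infty$ because $\mu_{k} > 0$ and $p > 1$. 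Hence $\varphi_{k}$ is continuous and coercive and attains its minimum. For uniqueness I would show $\varphi_{k}$ is strictly convex: the linear and quadratic terms and $\rho_{k} \Vert \cdot \Vert_{1}$ are convex, and $d \mapsto \tfrac{\mu_{k}}{p} \Vert d \Vert^{p}$ is strictly convex for $p > 1$, obtained by composing the strictly increasing, strictly convex map $t \mapsto t^{p}$ on $[0,\infty)$ with the Euclidean norm and distinguishing the case where the triangle inequality is strict (non–positively proportional arguments) from the case of positively proportional arguments, which reduces directly to strict convexity of $t \mapsto t^{p}$. This yields a unique minimizer, denoted $d_{k}$.

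Next, for the stationarity condition I would use that $d \mapsto \tfrac{\mu_{k}}{p} \Vert d \Vert^{p}$ is continuously differentiable on all of $\mathbb{R}^{n}$ with gradient $\mu_{k} \Vert d \Vert^{p-2} d$, understood as $0$ at $d = 0$ (legitimate since $\Vert d \Vert^{p-1} \to 0$ as $d \to 0$ when $p > 1$). Thus $\varphi_{k}$ is the sum of a $C^{1}$ convex function and the convex function $\rho_{k} \Vert \cdot \Vert_{1}$, so $d_{k}$ is a global minimizer if and only if $0 \in \partial \varphi_{k}(d_{k}) = \nabla f(x_{k}) + (\nabla^{2} f(x_{k}) + \mu_{k} \Vert d_{k} \Vert^{p-2} I) d_{k} + \rho_{k} \partial \Vert d_{k} \Vert_{1}$, which is precisely the asserted identity for a suitable $\eta_{k} \in \partial \Vert d_{k} \Vert_{1}$. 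To see $d_{k} \neq 0$, suppose $d_{k} = 0$; then the condition at $0$ reads $\nabla f(x_{k}) = -\rho_{k} \eta_{k}$ with $\eta_{k} \in \partial \Vert 0 \Vert_{1} = \{ v : \Vert v \Vert_{\infty} \leq 1 \}$, forcing $\Vert \nabla f(x_{k}) \Vert_{\infty} \leq \rho_{k}$ and contradicting the hypothesis. (Alternatively one can directly exhibit a coordinate step $d = -t\,\mathrm{sign}([\nabla f(x_{k})]_{j}) e_{j}$, with $j$ attaining $\Vert \nabla f(x_{k}) \Vert_{\infty}$, for which $\varphi_{k}(d) = -t(\Vert \nabla f(x_{k}) \Vert_{\infty} - \rho_{k}) + {\cal O}(t^{2}) + \tfrac{\mu_{k}}{p} t^{p} < 0$ for small $t > 0$, using $p > 1$.) Finally, since $0$ is not a minimizer, $\varphi_{k}(d_{k}) < \varphi_{k}(0) = 0$, and because $\tfrac{1}{2}\langle \nabla^{2} f(x_{k}) d_{k}, d_{k} \rangle \geq 0$, $\rho_{k} \Vert d_{k} \Vert_{1} \geq 0$, and $\tfrac{\mu_{k}}{p} \Vert d_{k} \Vert^{p} > 0$, this gives $\langle \nabla f(x_{k}), d_{k} \rangle < -\tfrac{\mu_{k}}{p} \Vert d_{k} \Vert^{p} < 0$, the desired descent property.

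I expect the only non-routine point to be the treatment of $d \mapsto \Vert d \Vert^{p}$ for $p \in (1,2)$: in that range the Euclidean norm itself is not strictly convex and $\Vert \cdot \Vert^{p}$ is not twice differentiable at the origin, so both the strict convexity and the $C^{1}$ regularity at $0$ need a short separate argument rather than a direct Hessian computation; once these two facts are established, the remainder is a standard convexity and coercivity argument.
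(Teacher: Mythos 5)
Your proof is correct and follows essentially the same route as the paper: coercivity and strict convexity give existence and uniqueness, the first-order optimality condition yields the stated identity, and the same contradiction with $\rho_{k} < \Vert \nabla f(x_{k}) \Vert_{\infty}$ rules out $d_{k} = 0$ (you also supply the details the paper leaves to a citation, namely the explicit coercivity bound, the strict convexity of $\Vert \cdot \Vert^{p}$, and the $C^{1}$ regularity of $\Vert \cdot \Vert^{p}$ at the origin for $p \in (1,2)$). The only genuine divergence is the descent property: the paper substitutes the stationarity identity to obtain $\langle \nabla f(x_{k}), d_{k} \rangle = - \langle (\nabla^{2} f(x_{k}) + \mu_{k} \Vert d_{k} \Vert^{p-2} I) d_{k}, d_{k} \rangle - \rho_{k} \Vert d_{k} \Vert_{1} < 0$, whereas you compare function values via $\varphi_{k}(d_{k}) < \varphi_{k}(0) = 0$; both arguments are valid and equally short.
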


\begin{proof}
First, we show the solvability of~\eqref{subproblem}. Recall that $\varphi_{k}$ is closed, proper, and coercive. Hence, by using~\cite[Proposition~3.2.1]{Be:ConOpt}, problem~\eqref{subproblem} has a global optimum $d_{k}$. The uniqueness of $d_{k}$ is derived from the strict convexity of $\varphi_{k}$.
\par
Hereafter, we show that $d_{k} \not = 0$ is satisfied. We assume to the contrary that $d_{k} = 0$ holds. As $d_{k} = 0$ satisfies the first-order optimality condition of~\eqref{subproblem}, there exists $\eta_{k} \in \partial \Vert d_{k} \Vert_{1}$ such that $\nabla f(x_{k}) + \rho_{k} \eta_{k} = 0$. It then follows from $\rho_{k} < \Vert \nabla f(x_{k}) \Vert_{\infty}$ that $\Vert \nabla f(x_{k}) \Vert_{\infty} = \rho_{k} < \Vert \nabla f(x_{k}) \Vert_{\infty}$. This result contradicts, that is, $d_{k} \not = 0$.
\par
Finally, the first-order optimality condition of~\eqref{subproblem} leads to the desired equality, and it yields
\begin{align*}
\langle \nabla f(x_{k}), d_{k} \rangle = - \langle (\nabla^2 f(x_{k}) + \mu_{k} \Vert d_{k} \Vert^{p-2} I)d_{k}, d_{k} \rangle - \rho_{k} \Vert d_{k} \Vert_{1} < 0,
\end{align*}
where note that $\langle d_{k}, \eta_{k} \rangle = \Vert d_{k} \Vert_{1}$ and $d_{k} \not = 0$. This completes the proof.
\end{proof}

\begin{remark}
By utilizing the line search strategy or an appropriate choice of $\mu_{k}$ and $\rho_{k}$, we can prove the global convergence of Algorithm~{\rm \ref{algo:GRNM}}. However, because ${\cal O}(k^{-2})$ convergence implies global convergence, we omit discussions on the line search.
\end{remark}

\noindent
We provide a formal description of the proposed method in Algorithm~\ref{algo:GRNM}.
\begin{algorithm}[h] \caption{(GRNM)} \label{algo:GRNM}
\begin{algorithmic}[1]
\State Choose $p \in (1,3]$, $x_{0} \in \mathbb{R}^{n}$, $\varepsilon > 0$, and set $k \coloneqq 0$.
\State If $\Vert \nabla f(x_{k}) \Vert \leq \varepsilon$, then stop. \label{state:TerminationCriteria}
\State Set parameters $\mu_{k} > 0$ and $\rho_{k} \geq 0$, and find a global optimum $d_{k}$ of~\eqref{subproblem}.
\State Set $x_{k+1} \coloneqq x_{k} + d_{k}$.
\State Set $k \leftarrow k+1$, and go to Line~\ref{state:TerminationCriteria}.
\end{algorithmic}
\end{algorithm}

\section{Global $\mathcal{O}(k^{-2})$ convergence of Algorithm~\ref{algo:GRNM}} \label{sec:global_convergence}
This section shows that Algorithm~\ref{algo:GRNM} globally converges with the $\mathcal{O}(k^{-2})$ rate. From now on, we denote by $x^{\ast} \in \mathbb{R}^{n}$ an optimal solution of problem~\eqref{problem:convex}, and use the following notation: $f^{\ast} \coloneqq f(x^{\ast})$ and ${\cal S} \coloneqq \{ x \in \mathbb{R}^{n}; f(x) \leq f(x_{0}) \}$. 
\par
We will show global ${\cal O}(k^{-2})$ convergence of Algorithm~\ref{algo:GRNM} by showing that assumptions~(i)--(v) in Proposition~\ref{prop:global_convergence} hold for a sequence $\{ x_{k} \}$ generated by Algorithm~\ref{algo:GRNM}.
\par
In the subsequent argument, we suppose that $\varepsilon = 0$ and Algorithm~\ref{algo:GRNM} generates an infinite sequence $\{ x_{k} \}$ satisfying $\nabla f(x_{k}) \not = 0$ for each $k \in \mathbb{N} \cup \{ 0 \}$. Moreover, we make the following assumptions.
\begin{description}
\item[(A1)] There exists $L > 0$ such that for any $x, \, y \in \mathbb{R}^{n}$,
\begin{align*}
& \hspace{-5mm} \Vert \nabla f(x) - \nabla f(y) - \nabla^{2} f(y) (x - y) \Vert \leq L \Vert x - y \Vert^{2}, 
\\
& \hspace{-5mm} | f(x) - f(y) - \langle \nabla f(y), x - y \rangle - \frac{1}{2} \langle \nabla^{2} f(y) (x-y), x-y \rangle | \leq \frac{L}{3} \Vert x - y \Vert^{3}.
\end{align*}

\item[(A2)] The parameters $\mu_{k}$ and $\rho_{k}$ are set as follows: For all $k \in \mathbb{N} \cup \{ 0 \}$,
\begin{align*}
\mu_{k} \coloneqq c_{1}^{\frac{p-1}{2}} \Vert \nabla f(x_{k}) \Vert^{\frac{3-p}{2}}, ~ \rho_{k} \coloneqq \min \left\{ \frac{q}{\sqrt{n}} \Vert \nabla f(x_{k}) \Vert, c_{2} \Vert \nabla f(x_{k}) \Vert^{\frac{p+1}{2}} \right\},
\end{align*}
where $c_{1} \geq L$, $c_{2} \in (0,1)$, and $q \geq 0$.

\item[(A3)] There exists $R > 0$ such that ${\cal S} \subset B(0, R)$. 
\end{description}

\noindent
Note that subproblem~\eqref{subproblem} has a global optimum $d_{k} \not= 0$ because (A2) satisfies the condition of Lemma~\ref{lemma:solvable}. Note also that several basic properties of linear algebra derive
\begin{gather}
\left\Vert (\nabla^{2} f(x_{k}) + \mu_{k} \Vert d_{k} \Vert^{p-2} I)^{-1} \right\Vert \leq \mu_{k}^{-1} \Vert d_{k} \Vert^{2-p}, \label{two_ineq:mat_H}
\\
\left\Vert (\nabla^{2} f(x_{k}) + \mu_{k} \Vert d_{k} \Vert^{p-2} I)^{-1} \nabla^{2} f(x_{k}) \right\Vert \leq 1. \label{two_ineq:mat_H2}
\end{gather}

\noindent
We now show that assumptions~(i) and~(ii) in Proposition~\ref{prop:global_convergence} hold.

\begin{lemma} \label{lemma:f_gradf_ineq}
Suppose that {\rm (A1)} and {\rm (A2)} are satisfied. Suppose also that $3 - (1 + q)^{\frac{3-p}{p-1}} > 0$ where $q$ is a constant in {\rm (A2)}. For any $k \in \mathbb{N} \cup \{ 0 \}$, the following inequalities hold:
\begin{align*}
&\hspace{-15mm} \mbox{{\rm (a)}} ~ \displaystyle f(x_{k+1}) - f(x_{k}) \leq -\frac{3 - (1 + q)^{\frac{3-p}{p-1}}}{3} \mu_{k} \Vert d_{k} \Vert^{p} < 0,
\\
&\hspace{-15mm} \mbox{{\rm (b)}} ~ \displaystyle \Vert \nabla f(x_{k+1}) \Vert \leq \left( 1 + (1 + q)^{\frac{3-p}{p-1}} \right) \mu_{k} \Vert d_{k} \Vert^{p-1} 
\\
&\hspace{-15mm}\qquad \qquad \qquad \qquad \qquad + \min \left\{ q \Vert \nabla f(x_{k}) \Vert, \sqrt{n} c_{2} \Vert \nabla f(x_{k}) \Vert^{\frac{p+1}{2}} \right\},
\\
&\hspace{-15mm} \mbox{{\rm (c)}} ~ \displaystyle \Vert \nabla f(x_{k+1}) \Vert \leq \left( 1 + 2 q + (1 + q)^{\frac{2}{p-1}} \right) \Vert \nabla f(x_{k}) \Vert.
\end{align*}
\end{lemma}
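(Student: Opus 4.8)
The plan is to derive all three bounds from the first-order optimality condition in Lemma~\ref{lemma:solvable} together with the approximation inequalities in (A1), the main preliminary step being an a priori estimate of $\Vert d_{k} \Vert$. From the optimality condition, $d_{k} = -(\nabla^{2} f(x_{k}) + \mu_{k} \Vert d_{k} \Vert^{p-2} I)^{-1}(\nabla f(x_{k}) + \rho_{k} \eta_{k})$ for some $\eta_{k} \in \partial \Vert d_{k} \Vert_{1}$, so $\Vert \eta_{k} \Vert_{\infty} \leq 1$ and hence $\Vert \eta_{k} \Vert \leq \sqrt{n}$. Combining \eqref{two_ineq:mat_H} with $\sqrt{n} \rho_{k} \leq q \Vert \nabla f(x_{k}) \Vert$ (from (A2)) gives $\Vert d_{k} \Vert \leq \mu_{k}^{-1} \Vert d_{k} \Vert^{2-p} (1+q) \Vert \nabla f(x_{k}) \Vert$, that is, $\mu_{k} \Vert d_{k} \Vert^{p-1} \leq (1+q) \Vert \nabla f(x_{k}) \Vert$. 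Substituting $\mu_{k} = c_{1}^{(p-1)/2} \Vert \nabla f(x_{k}) \Vert^{(3-p)/2}$ and using $1 - \tfrac{3-p}{2} = \tfrac{p-1}{2}$ yields $\Vert d_{k} \Vert \leq (1+q)^{1/(p-1)} c_{1}^{-1/2} \Vert \nabla f(x_{k}) \Vert^{1/2}$; raising this to the power $3-p$, recalling $\tfrac{3-p}{2} + \tfrac{p-1}{2} = 1$, and using $c_{1} \geq L$ produces the key estimate $L \Vert d_{k} \Vert^{3-p} \leq (1+q)^{(3-p)/(p-1)} \mu_{k}$.

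For (a), I apply the cubic inequality in (A1) at $x = x_{k+1} = x_{k}+d_{k}$, $y = x_{k}$ to get $f(x_{k+1}) \leq f(x_{k}) + \langle \nabla f(x_{k}), d_{k} \rangle + \tfrac{1}{2}\langle \nabla^{2} f(x_{k}) d_{k}, d_{k} \rangle + \tfrac{L}{3}\Vert d_{k} \Vert^{3}$. The identity $\langle \nabla f(x_{k}), d_{k} \rangle = -\langle (\nabla^{2} f(x_{k}) + \mu_{k}\Vert d_{k}\Vert^{p-2}I)d_{k}, d_{k}\rangle - \rho_{k}\Vert d_{k}\Vert_{1}$ from the proof of Lemma~\ref{lemma:solvable}, together with convexity of $f$ and $\rho_{k} \geq 0$, gives $\langle \nabla f(x_{k}), d_{k} \rangle + \tfrac{1}{2}\langle \nabla^{2} f(x_{k}) d_{k}, d_{k} \rangle \leq -\mu_{k} \Vert d_{k} \Vert^{p}$. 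Then writing $\tfrac{L}{3}\Vert d_{k} \Vert^{3} = \tfrac{1}{3}(L \Vert d_{k} \Vert^{3-p}) \Vert d_{k} \Vert^{p}$ and applying the key estimate above yields (a); strict negativity follows from $d_{k} \neq 0$, $\mu_{k} > 0$, and $3 - (1+q)^{(3-p)/(p-1)} > 0$.

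For (b), the first inequality in (A1) gives $\Vert \nabla f(x_{k+1}) \Vert \leq \Vert \nabla f(x_{k}) + \nabla^{2} f(x_{k}) d_{k} \Vert + L \Vert d_{k} \Vert^{2}$, and the optimality condition gives $\nabla f(x_{k}) + \nabla^{2} f(x_{k}) d_{k} = -\mu_{k} \Vert d_{k} \Vert^{p-2} d_{k} - \rho_{k} \eta_{k}$, so the first term is at most $\mu_{k} \Vert d_{k} \Vert^{p-1} + \sqrt{n}\rho_{k}$, where $\sqrt{n}\rho_{k} = \min\{ q\Vert \nabla f(x_{k}) \Vert, \sqrt{n} c_{2} \Vert \nabla f(x_{k}) \Vert^{(p+1)/2}\}$ by (A2). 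Writing $L\Vert d_{k} \Vert^{2} = (L\Vert d_{k} \Vert^{3-p})\Vert d_{k} \Vert^{p-1} \leq (1+q)^{(3-p)/(p-1)}\mu_{k}\Vert d_{k} \Vert^{p-1}$ via the key estimate and adding completes (b). For (c), I bound the minimum in (b) by $q\Vert \nabla f(x_{k}) \Vert$ and substitute $\mu_{k}\Vert d_{k} \Vert^{p-1} \leq (1+q)\Vert \nabla f(x_{k}) \Vert$ established at the outset; the resulting constant is $(1+q)(1 + (1+q)^{(3-p)/(p-1)}) + q = 1 + 2q + (1+q)^{2/(p-1)}$, using $\tfrac{3-p}{p-1} + 1 = \tfrac{2}{p-1}$.

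I expect no serious obstacle: once the a priori bound on $\Vert d_{k} \Vert$ is in hand, everything is arithmetic. The one point that needs care is the exponent bookkeeping, namely checking that the powers of $\Vert \nabla f(x_{k}) \Vert$ in the definitions of $\mu_{k}$ and $\rho_{k}$ in (A2) are exactly the ones that let $\Vert d_{k} \Vert^{3-p}$ and $\Vert d_{k} \Vert^{p-1}$ be absorbed into $\mu_{k}$-terms, which is precisely why $c_{1} \geq L$ and $c_{2} \in (0,1)$ are the right normalizations.
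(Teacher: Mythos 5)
Your proposal is correct and follows essentially the same route as the paper: the a priori bound $\mu_{k}\Vert d_{k}\Vert^{p-1}\le(1+q)\Vert\nabla f(x_{k})\Vert$ from the optimality system and \eqref{two_ineq:mat_H}, the resulting absorption $L\Vert d_{k}\Vert^{2}\le(1+q)^{\frac{3-p}{p-1}}\mu_{k}\Vert d_{k}\Vert^{p-1}$ (your ``key estimate'' is just this divided by $\Vert d_{k}\Vert^{p-1}$), and then the cubic/quadratic inequalities of (A1) for (a) and (b), with (c) obtained exactly as you describe. All exponent bookkeeping checks out, so nothing further is needed.
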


\begin{proof}
It follows from Lemma~\ref{lemma:solvable} and \eqref{two_ineq:mat_H} that
\begin{align*}
\Vert d_{k} \Vert &\leq \left\Vert (\nabla^{2} f(x_{k}) + \mu_{k} \Vert d_{k} \Vert^{p-2} I )^{-1} (\nabla f(x_{k}) + \rho_{k} \eta_{k}) \right\Vert \nonumber
\\
&\leq \mu_{k}^{-1} \Vert d_{k} \Vert^{2-p} (\Vert \nabla f(x_{k}) \Vert + \sqrt{n} \rho_{k})
\\
&\leq \mu_{k}^{-1} \Vert d_{k} \Vert^{2-p} (1 + q) \Vert \nabla f(x_{k}) \Vert, 
\end{align*}
that is,
\begin{align}
\Vert d_{k} \Vert \leq \frac{ (1 + q)^{\frac{1}{p-1}} }{\sqrt{c_{1}}} \sqrt{\Vert \nabla f(x_{k}) \Vert}.  \label{ineq:dk}
\end{align}
By~\eqref{ineq:dk} and the first equality of (A2), we have
\begin{align}
L \Vert d_{k} \Vert^{2}
&\leq c_{1} \Vert d_{k} \Vert^{3-p} \cdot \Vert d_{k} \Vert^{p-1} \nonumber
\\
&\leq (1 + q)^{\frac{3-p}{p-1}} \cdot c_{1}^{\frac{p-1}{2}} \Vert \nabla f(x_{k}) \Vert^{\frac{3-p}{2}} \cdot \Vert d_{k} \Vert^{p-1} \nonumber
\\
&= (1 + q)^{\frac{3-p}{p-1}} \mu_{k} \Vert d_{k} \Vert^{p-1}. \label{ineq:c1dk}
\end{align}
Recall that $x_{k+1} = x_{k} + d_{k}$ and $\langle \eta_{k}, d_{k} \rangle = \Vert d_{k} \Vert_{1}$. Combining the second inequality of (A1), Lemma~\ref{lemma:solvable}, and \eqref{ineq:c1dk} yields
\begin{align*}
\begin{aligned}
f(x_{k+1}) - f(x_{k})
&\leq \langle \nabla f(x_{k}) + \nabla^{2} f(x_{k}) d_{k}, d_{k} \rangle - \frac{1}{2} \langle \nabla^{2} f(x_{k}) d_{k}, d_{k} \rangle + \frac{L}{3} \Vert d_{k} \Vert^{3}
\\
&\leq - \mu_{k} \Vert d_{k} \Vert^{p} - \rho_{k} \Vert d_{k} \Vert_{1} + \frac{1}{3} \Vert d_{k} \Vert \cdot L \Vert d_{k} \Vert^{2}
\\
&\leq -\frac{ 3 - (1 + q)^{\frac{3-p}{p-1}} }{3} \mu_{k} \Vert d_{k} \Vert^{p}.
\end{aligned}
\end{align*}
From $x_{k+1} = x_{k} + d_{k}$ and Lemma~\ref{lemma:solvable}, we obtain
\begin{align*}
\nabla f(x_{k+1}) = \nabla f(x_{k} + d_{k}) - \nabla f(x_{k}) - (\nabla^{2} f(x_{k}) + \mu_{k} \Vert d_{k} \Vert^{p-2} I) d_{k} - \rho_{k} \eta_{k}.
\end{align*}
Subsequently, exploiting the first inequality of (A1), the second equality of (A2), and \eqref{ineq:c1dk} derives
\begin{align}
\hspace{-0.5mm} \Vert \nabla f(x_{k+1}) \Vert
&\leq \Vert \nabla f(x_{k} + d_{k}) - \nabla f(x_{k}) - \nabla^{2} f(x_{k}) d_{k} \Vert + \mu_{k} \Vert d_{k} \Vert^{p-1} + \sqrt{n} \rho_{k} \nonumber
\\
&\leq L \Vert d_{k} \Vert^{2} + \mu_{k} \Vert d_{k} \Vert^{p-1} \nonumber
\\
& \qquad \qquad \qquad + \min \left\{ q \Vert \nabla f(x_{k}) \Vert, \sqrt{n} c_{2} \Vert \nabla f(x_{k}) \Vert^{\frac{p+1}{2}} \right\} \nonumber
\\
&\leq \left( 1 + (1 + q)^{\frac{3-p}{p-1}} \right) \mu_{k} \Vert d_{k} \Vert^{p-1} \nonumber
\\
& \qquad \qquad \qquad + \min \left\{ q \Vert \nabla f(x_{k}) \Vert, \sqrt{n} c_{2} \Vert \nabla f(x_{k}) \Vert^{\frac{p+1}{2}} \right\}, \label{ineq:nablaf_kplus1}
\end{align}
namely, item~(b) is verified. Meanwhile, from \eqref{ineq:dk} and the first equality of (A2), we have $\mu_{k} \Vert d_{k} \Vert^{p-1} \leq (1 + q) \Vert \nabla f(x_{k}) \Vert$. Utilizing this result, \eqref{ineq:nablaf_kplus1}, and $\min \{ q \Vert \nabla f(x_{k}) \Vert, \sqrt{n} c_{2} \Vert \nabla f(x_{k}) \Vert^{\frac{p+1}{2}} \} \leq q \Vert \nabla f(x_{k}) \Vert$ means
\begin{align*}
\Vert \nabla f(x_{k+1}) \Vert \leq \left( 1 + 2 q + (1 + q)^{\frac{2}{p-1}} \right) \Vert \nabla f(x_{k}) \Vert. 
\end{align*}
Therefore, the desired inequalities are obtained.
\end{proof}

\noindent
Now, we provide the global ${\cal O}(k^{-2})$ convergence property of Algorithm~\ref{algo:GRNM} by showing assumptions~(iii)-(v) in Proposition~\ref{prop:global_convergence}.

\begin{theorem} \label{th:global}
Suppose that {\rm (A1)--(A3)} hold. Moreover, suppose that the following assumptions~{\rm (A4)--(A6)} hold:
\begin{description}
\item[{\rm (A4)}] $3 - (1 + q)^{\frac{3-p}{p-1}} > 0$;

\item[{\rm (A5)}] $1 + 2 q + (1 + q)^{\frac{2}{p-1}} \geq 1$;

\item[{\rm (A6)}] there exist $\theta \in (q, 1)$, $\nu > 0$, and $\ell \in \mathbb{N}$ such that
\begin{align*}
\theta^{k} \leq \nu k^{-2}, \quad \left( \left( 1 + 2 q + (1 + q)^{\frac{2}{p-1}} \right) \theta \right)^{\frac{k}{2}} \leq \nu k^{-2} \quad \forall k \geq \ell.
\end{align*}
\end{description}
Let ${\cal I}(\theta) \coloneqq \{ i \in \mathbb{N} \cup \{ 0 \}; \theta \Vert \nabla f(x_{i}) \Vert \leq \Vert \nabla f(x_{i+1}) \Vert \}$. Let $D$ and $\tau$ be defined as
\begin{align*}
D \coloneqq R + \Vert x^{\ast} \Vert, \quad \tau \coloneqq \frac{(\theta - q)^{\frac{p}{p-1}} \left( 3 - (1 + q)^{\frac{3-p}{p-1}} \right)}{3 \sqrt{c_{1}} D^{\frac{3}{2}} \left( 1 + (1 + q)^{\frac{3-p}{p-1}} \right)^{\frac{p}{p-1}}}.
\end{align*}
Then, a sequence $\{ x_{k} \}$ generated by Algorithm~{\rm \ref{algo:GRNM}} satisfies one of the following statements:
\begin{description}
\item[{\rm (a)}] If $| {\cal I}(\theta) | < \infty$, then
\begin{align*}
f(x_{k}) - f^{\ast} \leq \frac{\theta^{-(\widehat{i}+1)} \nu D \Vert \nabla f(x_{\widehat{i} + 1}) \Vert}{k^{2}} \quad \forall k \geq \max \{ \ell, \widehat{i}+2 \},
\end{align*}
where $\widehat{i}$ is the largest element of ${\cal I}(\theta)$.

\item[{\rm (b)}] If $| {\cal I}(\theta) | = \infty$, then
\begin{align*}
f(x_{k}) - f^{\ast} \leq \max \left\{ \frac{36 \tau^{-2}}{(k + 4)^{2}}, \frac{ \nu D \Vert \nabla f(x_{0}) \Vert }{k^{2}} \right\} \quad \forall k \geq \ell.
\end{align*}
\end{description}
\end{theorem}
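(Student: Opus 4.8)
The plan is to verify that a sequence $\{ x_{k} \}$ generated by Algorithm~\ref{algo:GRNM} satisfies all five hypotheses of Proposition~\ref{prop:global_convergence}, with the identifications $\gamma \coloneqq 1 + 2q + (1+q)^{\frac{2}{p-1}}$, $\delta \coloneqq D$, $\theta,\nu,\ell$ as in~(A6), and $\tau$ as in the statement; conclusions~(a) and~(b) then follow verbatim from that proposition, because the set $\mathcal{I}(\theta)$ introduced here coincides with the one appearing there. Hypotheses~(i), (ii) and~(iv) cost essentially nothing: (i) is Lemma~\ref{lemma:f_gradf_ineq}(a) together with~(A4), which makes the bound strictly negative; (ii) is exactly Lemma~\ref{lemma:f_gradf_ineq}(c), with $\gamma \ge 1$ supplied by~(A5); and (iv) is just~(A6) after renaming $1 + 2q + (1+q)^{\frac{2}{p-1}}$ to $\gamma$. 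For hypothesis~(iii) I would combine convexity with boundedness: (i) gives $f(x_{k}) \le f(x_{0})$, so $x_{k} \in \mathcal{S}$, and $x^{\ast} \in \mathcal{S}$ as well, whence $x_{k}, x^{\ast} \in B(0,R)$ by~(A3) and $\Vert x_{k} - x^{\ast} \Vert \le R + \Vert x^{\ast} \Vert = D$; then $f(x_{k}) - f^{\ast} \le \langle \nabla f(x_{k}), x_{k} - x^{\ast} \rangle \le D \Vert \nabla f(x_{k}) \Vert$, which is~(iii) with $\delta = D$.

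The substantive step is hypothesis~(v). Fix $k \in \mathcal{I}(\theta)$, so $\theta \Vert \nabla f(x_{k}) \Vert \le \Vert \nabla f(x_{k+1}) \Vert$. Bounding the right-hand side by Lemma~\ref{lemma:f_gradf_ineq}(b) and using $\min\{ q \Vert \nabla f(x_{k}) \Vert, \sqrt{n} c_{2} \Vert \nabla f(x_{k}) \Vert^{\frac{p+1}{2}} \} \le q \Vert \nabla f(x_{k}) \Vert$ gives $(\theta - q) \Vert \nabla f(x_{k}) \Vert \le \bigl( 1 + (1+q)^{\frac{3-p}{p-1}} \bigr) \mu_{k} \Vert d_{k} \Vert^{p-1}$, with $\theta - q > 0$ by~(A6). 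Writing $\mu_{k} \Vert d_{k} \Vert^{p} = \mu_{k}^{-\frac{1}{p-1}} \bigl( \mu_{k} \Vert d_{k} \Vert^{p-1} \bigr)^{\frac{p}{p-1}}$, substituting $\mu_{k} = c_{1}^{\frac{p-1}{2}} \Vert \nabla f(x_{k}) \Vert^{\frac{3-p}{2}}$ from~(A2), and noting that the exponent of $\Vert \nabla f(x_{k}) \Vert$ that appears equals $\frac{p}{p-1} - \frac{3-p}{2(p-1)} = \frac{3}{2}$, one gets
\begin{align*}
\mu_{k} \Vert d_{k} \Vert^{p} \ge \frac{1}{\sqrt{c_{1}}} \left( \frac{\theta - q}{1 + (1+q)^{\frac{3-p}{p-1}}} \right)^{\frac{p}{p-1}} \Vert \nabla f(x_{k}) \Vert^{\frac{3}{2}}.
\end{align*}
Plugging this into Lemma~\ref{lemma:f_gradf_ineq}(a) and then using $\Vert \nabla f(x_{k}) \Vert \ge D^{-1}(f(x_{k}) - f^{\ast})$ from~(iii) yields $f(x_{k+1}) - f(x_{k}) \le -\tau (f(x_{k}) - f^{\ast})^{\frac{3}{2}}$ with exactly the constant $\tau$ of the statement, which is~(v).

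I expect the only delicate point to be the exponent bookkeeping in the last step: one must check that, after raising the lower bound on $\mu_{k} \Vert d_{k} \Vert^{p-1}$ to the power $\frac{p}{p-1}$ and absorbing the factor $\mu_{k}^{-\frac{1}{p-1}}$, the power of $\Vert \nabla f(x_{k}) \Vert$ collapses exactly to $\frac{3}{2}$ uniformly in $p$. This is precisely what the scaling $\mu_{k} \propto \Vert \nabla f(x_{k}) \Vert^{\frac{3-p}{2}}$ prescribed by~(A2) is designed to achieve, and it is what lets the whole argument run for every $p \in (1,3]$. Once hypotheses~(i)--(v) are in place, the theorem is an immediate application of Proposition~\ref{prop:global_convergence}.
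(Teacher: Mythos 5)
Your proposal is correct and follows essentially the same route as the paper: verify hypotheses (i)--(v) of Proposition~\ref{prop:global_convergence} with $\gamma = 1+2q+(1+q)^{\frac{2}{p-1}}$ and $\delta = D$, the only substantive work being hypothesis~(v), which you derive from Lemma~\ref{lemma:f_gradf_ineq}(b), the membership $k \in \mathcal{I}(\theta)$, and the scaling of $\mu_k$ in~(A2), arriving at exactly the paper's key bound $\mu_{k}\Vert d_{k}\Vert^{p} \geq \sqrt{c_1}^{-1}\bigl((\theta-q)/(1+(1+q)^{\frac{3-p}{p-1}})\bigr)^{\frac{p}{p-1}}\Vert\nabla f(x_{k})\Vert^{\frac{3}{2}}$ by an equivalent algebraic rearrangement. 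Your exponent bookkeeping ($\frac{p}{p-1}-\frac{3-p}{2(p-1)}=\frac{3}{2}$) checks out, and the resulting constant is precisely the stated $\tau$.
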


\begin{proof}
If items~(i)--(v) in Proposition~\ref{prop:global_convergence} hold, then the desired result can be obtained. Item~(i) directly follows from~(a) of Lemma~\ref{lemma:f_gradf_ineq}. Let us define $r \coloneqq 1 + 2 q + (1 + q)^{\frac{2}{p-1}}$. Recall that (A5) ensures $r \geq 1$. Thus, item~(ii) holds from (c) of Lemma~\ref{lemma:f_gradf_ineq}. The definition of $r$ and (A6) imply that item~(iv) is satisfied. Thus, it is sufficient to show items~(iii) and (v).
\par
Since $\{ x_{k} \} \subset {\cal S}$ holds, we have from (A3) that $\Vert x_{k} \Vert \leq R$ for $k \in \mathbb{N} \cup \{ 0 \}$. 
Let us take $k \in \mathbb{N} \cup \{ 0 \}$ arbitrarily. Then, it is clear that $\Vert x_{k} - x^{\ast} \Vert \leq D$, and hence the convexity of $f$ yields $f(x_{k}) - f^{\ast} \leq \langle \nabla f(x_{k}), x_{k} - x^{\ast} \rangle \leq D \Vert \nabla f(x_{k}) \Vert$. This fact implies that item~(iii) holds, and
\begin{align}
\left( \frac{ f(x_{k}) - f^{\ast} }{D} \right)^{\frac{3}{2}} \leq \Vert \nabla f(x_{k}) \Vert^{\frac{3}{2}}. \label{ineq:f_2/3}
\end{align}
Now, we take arbitrary $k \in {\cal I}(\theta)$. The definition of $\mu_{k}$ and~(b) of Lemma~\ref{lemma:f_gradf_ineq} derive
\begin{align*}
\theta \Vert \nabla f(x_{k}) \Vert \leq \left( 1 + (1 + q)^{\frac{3-p}{p-1}} \right) c_{1}^{\frac{p-1}{2}} \Vert \nabla f(x_{k}) \Vert^{\frac{3-p}{2}} \Vert d_{k} \Vert^{p-1} + q \Vert \nabla f(x_{k}) \Vert, 
\end{align*}
which implies
\begin{align*}
\frac{(\theta - q)^{\frac{p}{p-1}}}{c_{1}^{\frac{p}{2}} \left( 1 + (1 + q)^{\frac{3-p}{p-1}} \right)^{\frac{p}{p-1}}} \Vert \nabla f(x_{k}) \Vert^{\frac{p}{2}} \leq \Vert d_{k} \Vert^{p}.
\end{align*}
Multiplying both sides of this inequality by $\mu_{k} = c_{1}^{\frac{p-1}{2}} \Vert \nabla f(x_{k}) \Vert^{\frac{3-p}{2}}$ yields
\begin{align}
\frac{(\theta - q)^{\frac{p}{p-1}}}{\sqrt{c_{1}} \left( 1 + (1 + q)^{\frac{3-p}{p-1}} \right)^{\frac{p}{p-1}}} \Vert \nabla f(x_{k}) \Vert^{\frac{3}{2}} \leq \mu_{k} \Vert d_{k} \Vert^{p}. \label{ineq:nabf_mu_d}
\end{align}
Using item~(a) of Lemma~\ref{lemma:f_gradf_ineq} and \eqref{ineq:nabf_mu_d}, we obtain
\begin{align}
f(x_{k+1}) - f(x_{k}) \leq - \frac{ (\theta - q)^{\frac{p}{p-1}} \left( 3 - (1 + q)^{\frac{3-p}{p-1}} \right) }{3\sqrt{c_{1}} \left( 1 + (1 + q)^{\frac{3-p}{p-1}} \right)^{\frac{p}{p-1}}} \Vert \nabla f(x_{k}) \Vert^{\frac{3}{2}}. \label{ineq:fnew_f_nabf}
\end{align}
Moreover, from~\eqref{ineq:f_2/3} and \eqref{ineq:fnew_f_nabf},
\begin{align*}
f(x_{k+1}) - f(x_{k}) \leq - \tau (f(x_{k}) - f^{\ast})^{\frac{3}{2}} ~ \quad \forall k \in {\cal I}(\theta).
\end{align*}
Therefore, we can verify that item~(v) of Proposition~\ref{prop:global_convergence} holds.
\end{proof}

\noindent
From Theorem~\ref{th:global}, we have to indicate the existence of $q$ and $\theta$ satisfying (A4)--(A6) to show global ${\cal O}(k^{-2})$ convergence of Algorithm~\ref{algo:GRNM}. Although the existence of these parameters cannot be ensured for all $p > 1$, we can show their existence for specific $p \in (1, 3]$. Two examples of these concrete parameters are presented.

\begin{example} \label{ex:1}
\begin{align*}
q \coloneqq 0, \quad \theta \coloneqq \frac{3}{8}.
\end{align*} 
\end{example}

\begin{example} \label{ex:2}
\begin{align*}
q \coloneqq \min \left\{ \frac{1}{10} (2^{\frac{p-1}{3-p}} - 1), \frac{1}{20} 2^{\frac{3-p}{p-1}} \right\}, \quad
\theta \coloneqq \frac{1}{5}.
\end{align*} 
\end{example}

\noindent
The parameters $q$ and $\theta$ described in Examples~\ref{ex:1} and \ref{ex:2} satisfy conditions~(A4)--(A6). For details, see Appendix~\ref{app:param}.

\begin{remark}
When $(p, q, \theta) = (2, 0, 4^{-1})$, we can easily verify that $\tau = (96 D^{3/2} \sqrt{c_{1}})^{-1}$. This value coincides with that of Mishchenko~{\rm \cite{Mi23}}, implying that the proposed method is a generalization of~{\rm \cite{Mi23}}.
\end{remark}

\begin{remark}
Parameter $\theta$ described in Theorem~{\rm \ref{th:global}} is only required for the proof and is unrelated to problem~\eqref{problem:convex} and Algorithm~{\rm \ref{algo:GRNM}}. Hence, it should be selected to provide a good coefficient regarding the convergence rate. Since the coefficients are determined by
\begin{align*}
\frac{1}{\theta^{(\widehat{i} + 1)}}, \quad \left[ \frac{3 \sqrt{c_{1}} D^{\frac{3}{2}} \left( 1 + (1 + q)^{\frac{3-p}{p-1}} \right)^{\frac{p}{p-1}}}{(\theta - q)^{\frac{p}{p-1}} \left( 3 - (1 + q)^{\frac{3-p}{p-1}} \right)} \right]^2,
\end{align*}
we should take $\theta$ as large as possible.
\end{remark}

\section{Local superlinear convergence of Algorithm~\ref{algo:GRNM}} \label{sec:local_convergence}
This section aims to show local and superlinear convergence of Algorithm~\ref{algo:GRNM}. Throughout this section, the set of optimal solutions is denoted by $X^{\ast} \subset \mathbb{R}^{n}$. We first make an additional assumption.
\begin{description}
\item[(A7)] There exist $r_{1} > 0$ and $m_{1} > 0$ such that ${\rm dist}(x, X^{\ast}) \leq m_{1} \Vert \nabla f(x) \Vert$ for each $x \in B(x^{\ast}, r_{1})$.
\end{description}
\noindent
For a given point $x \in \mathbb{R}^{n}$, let $\widehat{x}$ be a point satisfying
\begin{align*}
\widehat{x} \in X^{\ast}, \quad \Vert \widehat{x} - x \Vert = {\rm dist}(x, X^{\ast}).
\end{align*}
\noindent
Some important inequalities for local convergence are as follows:

\begin{lemma} \label{lemma:d_dist}
Suppose that {\rm (A1), (A2), and (A7)} hold. Then, there exist $r_{2} > 0$, $m_{2} > 0$, $m_{3} > 0$, and $m_{4} > 0$ such that
\begin{description}
\item[{\rm (a)}] $\Vert d_{k} \Vert \leq m_{2}{\rm dist}(x_{k}, X^{\ast})$ and $\Vert d_{k} \Vert \geq m_{3} {\rm dist}(x_{k}, X^{\ast})$ for $x_{k} \in B(x^{\ast}, r_{2})$;
\item[{\rm (b)}] ${\rm dist}(x_{k+1}, X^{\ast}) \leq m_{4} {\rm dist}(x_{k}, X^{\ast})^{\frac{p+1}{2}}$ for $x_{k}, \, x_{k+1} \in B(x^{\ast}, r_{2})$.
\end{description}
\end{lemma}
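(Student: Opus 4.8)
The plan is to show that near $X^\ast$ the step $d_k$ is comparable to $\mathrm{dist}(x_k,X^\ast)$ (part (a)), and then use the optimality characterization of $d_k$ from Lemma~\ref{lemma:solvable} together with the second-order error bound in (A1) to get the $\frac{p+1}{2}$-order contraction in (A7)'s metric (part (b)). The starting point for the upper bound in (a) is inequality~\eqref{ineq:dk}, which gives $\Vert d_k\Vert \le \frac{(1+q)^{1/(p-1)}}{\sqrt{c_1}}\sqrt{\Vert\nabla f(x_k)\Vert}$; combining this with (A2)'s choice of $\mu_k,\rho_k$ and the error bound (A7) (valid once $x_k\in B(x^\ast,r_1)$), plus Lipschitz continuity of $\nabla f$ near $X^\ast$ so that $\Vert\nabla f(x_k)\Vert = \Vert\nabla f(x_k)-\nabla f(\widehat{x_k})\Vert \le M\,\mathrm{dist}(x_k,X^\ast)$, yields $\Vert d_k\Vert \le m_2\,\mathrm{dist}(x_k,X^\ast)^{1/2}\cdot\mathrm{dist}(x_k,X^\ast)^{1/2}$; shrinking the radius $r_2\le r_1$ so $\mathrm{dist}(x_k,X^\ast)\le 1$ absorbs one factor and gives the linear bound. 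For the lower bound, I would start from the optimality equation $\nabla f(x_k) + (\nabla^2 f(x_k)+\mu_k\Vert d_k\Vert^{p-2}I)d_k + \rho_k\eta_k = 0$, so that $\Vert d_k\Vert \ge \Vert(\nabla^2 f(x_k)+\mu_k\Vert d_k\Vert^{p-2}I)d_k\Vert / \Vert\nabla^2 f(x_k)+\mu_k\Vert d_k\Vert^{p-2}I\Vert \ge (\Vert\nabla f(x_k)\Vert - \sqrt n\rho_k)/(\Vert\nabla^2 f(x_k)\Vert + \mu_k\Vert d_k\Vert^{p-2})$; then $\rho_k\le \frac{q}{\sqrt n}\Vert\nabla f(x_k)\Vert$ controls the numerator from below by $(1-q)\Vert\nabla f(x_k)\Vert$, while $\mu_k\Vert d_k\Vert^{p-2} = c_1^{(p-1)/2}\Vert\nabla f(x_k)\Vert^{(3-p)/2}\Vert d_k\Vert^{p-2}$ is bounded using the already-established $\Vert d_k\Vert\le m_2\,\mathrm{dist}(x_k,X^\ast)$ and the error bound; a final use of $\Vert\nabla f(x_k)\Vert \ge (\text{strong-convexity-type constant})\cdot\mathrm{dist}(x_k,X^\ast)$ — which follows from (A7) rearranged, giving $\Vert\nabla f(x_k)\Vert \ge m_1^{-1}\mathrm{dist}(x_k,X^\ast)$ — produces the desired $\Vert d_k\Vert\ge m_3\,\mathrm{dist}(x_k,X^\ast)$.

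For part (b), the idea is to estimate $\mathrm{dist}(x_{k+1},X^\ast)\le \Vert x_{k+1}-\widehat{x_k}\Vert = \Vert x_k+d_k-\widehat{x_k}\Vert$. I would write, exactly as in the proof of Lemma~\ref{lemma:f_gradf_ineq},
\begin{align*}
\nabla f(x_{k+1}) = \bigl(\nabla f(x_k+d_k) - \nabla f(x_k) - \nabla^2 f(x_k)d_k\bigr) - \mu_k\Vert d_k\Vert^{p-2}d_k - \rho_k\eta_k,
\end{align*}
so that $\Vert\nabla f(x_{k+1})\Vert \le L\Vert d_k\Vert^2 + \mu_k\Vert d_k\Vert^{p-1} + \sqrt n\rho_k$. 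Now apply (A7) at $x_{k+1}$ (legitimate once $x_{k+1}\in B(x^\ast,r_2)\subset B(x^\ast,r_1)$) to get $\mathrm{dist}(x_{k+1},X^\ast)\le m_1\Vert\nabla f(x_{k+1})\Vert$, and estimate each of the three terms in terms of $\mathrm{dist}(x_k,X^\ast)$: the first is $O(\mathrm{dist}(x_k,X^\ast)^2)$ by part (a); the middle term is $\mu_k\Vert d_k\Vert^{p-1} = c_1^{(p-1)/2}\Vert\nabla f(x_k)\Vert^{(3-p)/2}\Vert d_k\Vert^{p-1}$, which by part (a) and $\Vert\nabla f(x_k)\Vert\le M\,\mathrm{dist}(x_k,X^\ast)$ is $O(\mathrm{dist}(x_k,X^\ast)^{(3-p)/2 + (p-1)}) = O(\mathrm{dist}(x_k,X^\ast)^{(p+1)/2})$; and the last term is $\sqrt n\rho_k \le \sqrt n c_2\Vert\nabla f(x_k)\Vert^{(p+1)/2} = O(\mathrm{dist}(x_k,X^\ast)^{(p+1)/2})$ by the second branch of the $\min$ in (A2). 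Since $\frac{p+1}{2}\le 2$ for $p\le 3$, shrinking $r_2$ so that $\mathrm{dist}(x_k,X^\ast)\le 1$ makes the quadratic term dominated by the $\frac{p+1}{2}$-power term, yielding $\mathrm{dist}(x_{k+1},X^\ast)\le m_4\,\mathrm{dist}(x_k,X^\ast)^{(p+1)/2}$.

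The main obstacle is bookkeeping the radii and the interdependence of the constants: the lower bound in (a) needs the upper bound in (a), the upper bound needs (A7) which needs $x_k\in B(x^\ast,r_1)$, and part (b) needs \emph{both} $x_k$ and $x_{k+1}$ in the same ball $B(x^\ast,r_2)$ — so one must choose $r_2$ small enough (depending on $m_2$, $L$, $c_1$, $c_2$, $m_1$, and a local Lipschitz constant for $\nabla f$) that all the "$\mathrm{dist}\le 1$" absorptions are valid and that the iterate does not leave the ball. There is also a subtlety in the lower bound of (a): one must ensure $q<1$ so that $1-q>0$; this is not among the stated hypotheses but is guaranteed under the running assumptions for global convergence (indeed Examples~\ref{ex:1} and~\ref{ex:2} have $q<1$), and in any case it is harmless to add $q<1$ to the standing assumptions. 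None of the estimates is deep — each is a one-line consequence of (A1), (A2), (A7), and the optimality identity — but assembling them in the right order and pinning down $r_2$ is where the care lies.
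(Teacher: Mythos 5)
There is a genuine gap in your derivation of the upper bound $\Vert d_k\Vert\le m_2\,{\rm dist}(x_k,X^\ast)$, and this bound is the load-bearing estimate for everything else: the lower bound in (a) and all three terms in (b) lean on it. Starting from \eqref{ineq:dk} you obtain $\Vert d_k\Vert\le C\Vert\nabla f(x_k)\Vert^{1/2}\le C\,(M\,{\rm dist}(x_k,X^\ast))^{1/2}$, i.e.\ a single factor of ${\rm dist}(x_k,X^\ast)^{1/2}$, not the product of two such factors that you write down. Moreover, the proposed absorption goes the wrong way: when ${\rm dist}(x_k,X^\ast)\le 1$ one has ${\rm dist}(x_k,X^\ast)^{1/2}\ge{\rm dist}(x_k,X^\ast)$, so no shrinking of the radius upgrades an $O({\rm dist}^{1/2})$ bound to an $O({\rm dist})$ bound. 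The damage propagates: with only $\Vert d_k\Vert=O({\rm dist}^{1/2})$, the term $\tfrac{c_1m_1}{2}\Vert d_k\Vert^2$ in (b) is merely $O({\rm dist})$, which is strictly weaker than $O({\rm dist}^{(p+1)/2})$ because $\tfrac{p+1}{2}>1$, and the middle term $\mu_k\Vert d_k\Vert^{p-1}$ likewise degrades to $O({\rm dist})$; so part (b) fails as well.

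The missing idea is the finer estimate the paper uses: since $\nabla f(\widehat{x}_k)=0$, write $\nabla f(x_k) = -(\nabla f(\widehat{x}_k)-\nabla f(x_k)-\nabla^{2} f(x_k)(\widehat{x}_k-x_k))-\nabla^{2} f(x_k)(\widehat{x}_k-x_k)$, apply $(\nabla^{2} f(x_k)+\mu_k\Vert d_k\Vert^{p-2}I)^{-1}$ to the optimality equation, and bound the first piece with \eqref{two_ineq:mat_H} and the second with the crucial inequality \eqref{two_ineq:mat_H2}, $\Vert(\nabla^{2} f(x_k)+\mu_k\Vert d_k\Vert^{p-2}I)^{-1}\nabla^{2} f(x_k)\Vert\le 1$. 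This exploits a cancellation that the crude bound \eqref{ineq:dk} throws away and yields $\Vert d_k\Vert\le\frac{u_3}{\mu_k\Vert d_k\Vert^{p-2}}{\rm dist}(x_k,X^\ast)^{\frac{p+1}{2}}+{\rm dist}(x_k,X^\ast)$; a two-case (max) argument together with (A2) and (A7) then delivers the linear bound. The rest of your plan is sound and matches the paper: the lower bound via the optimality identity (though using the second branch $\rho_k\le c_2\Vert\nabla f(x_k)\Vert^{\frac{p+1}{2}}$ of the min, as the paper does, lets you make $\sqrt{n}\rho_k\le\tfrac12\Vert\nabla f(x_k)\Vert$ by shrinking $r_2$ and avoids adding the hypothesis $q<1$), and the term-by-term estimate in (b), whose exponent bookkeeping is correct once the linear upper bound is in hand.
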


\begin{proof}
To begin with, we define $u_{1}$ and $u_{2}$ as
\begin{align*}
u_{1} \coloneqq \sup \left\{ \Vert \nabla^{2} f(z) \Vert; z \in B(x^{\ast}, r_{1}) \right\}, \quad u_{2} \coloneqq u_{1} + \frac{c_{1} r_{1}}{2},
\end{align*}
respectively, and will show the following inequality:
\begin{align}
\Vert \nabla f(x) \Vert \leq u_{2} {\rm dist}(x, X^{\ast}) \quad \forall x \in B(x^{\ast}, r_{1}). \label{ineq:nabla_f_dist}
\end{align}
We have $\Vert \widehat{x} - x \Vert \leq \Vert x - x^{\ast} \Vert \leq r_{1}$. Hence, the first inequality of (A1) guarantees $\Vert \nabla f(\widehat{x}) - \nabla f(x) - \nabla^{2} f(x) (\widehat{x} - x) \Vert \leq \frac{c_{1}}{2} {\rm dist}(x, X^{\ast})^{2}$. It then follows from $\nabla f(\widehat{x}) = 0$ and ${\rm dist}(x, X^{\ast}) \leq \Vert x - x^{\ast}  \Vert \leq r_{1}$ that $\Vert \nabla f(x) \Vert - u_{1} {\rm dist}(x, X^{\ast}) \leq \frac{c_{1} r_{1}}{2} {\rm dist}(x, X^{\ast})$. Thus, inequality~\eqref{ineq:nabla_f_dist} holds.
\par
We show item~(a). Define $r_{2}$ as follows: 
\begin{align*}
r_{2} \coloneqq \min \left\{ r_{1}, \left( 2 \sqrt{n} c_{2} u_{2}^{\frac{p-1}{2}} \right)^{-\frac{2}{p-1}} \right\}. 
\end{align*}
Let $x_{k} \in B(x^{\ast}, r_{2})$. The definition of ${\rm dist}(x_{k}, X^{\ast})$ and inequality~\eqref{ineq:nabla_f_dist} ensure
\begin{align}
& {\rm dist}(x_{k}, X^{\ast}) \leq \Vert x_{k} - x^{\ast} \Vert \leq r_{2} \label{ineq:dist_r}
\\
& \Vert \nabla f(x_{k}) \Vert \leq u_{2} {\rm dist}(x_{k}, X^{\ast}) \leq u_{2} \Vert x_{k} - x^{\ast} \Vert \leq u_{2} r_{2}. \label{ineq:dist_mr}
\end{align}
Lemma~\ref{lemma:solvable} and~\eqref{two_ineq:mat_H} lead to $\Vert d_{k} \Vert \leq \Vert ( \nabla^{2} f(x_{k}) + \mu_{k} \Vert d_{k} \Vert^{p-2} I )^{-1} \nabla f(x_{k}) \Vert + \frac{\sqrt{n} \rho_{k}}{\mu_{k} \Vert d_{k} \Vert^{p-2}}$.
It then follows from (A2) and~\eqref{ineq:nabla_f_dist} that
\begin{align}
\Vert d_{k} \Vert 
&\leq \left\Vert ( \nabla^{2} f(x_{k}) + \mu_{k} \Vert d_{k} \Vert^{p-2} I )^{-1} \nabla f(x_{k}) \right\Vert \nonumber
\\
&\hspace{40mm} 
+ \frac{\sqrt{n} c_{2} u_{2}^{\frac{p+1}{2}} }{\mu_{k} \Vert d_{k} \Vert^{p-2}} {\rm dist}(x_{k}, X^{\ast})^{\frac{p+1}{2}}. \label{ineq:dk_dist1}
\end{align}
Now, we notice that $\nabla f(x_{k}) = - (\nabla f(\widehat{x}_{k}) - \nabla f(x_{k}) - \nabla^{2} f(x_{k}) (\widehat{x}_{k} - x_{k}) ) - \nabla^{2} f(x_{k}) (\widehat{x}_{k} - x_{k})$ holds from $\nabla f(\widehat{x}_{k}) = 0$. Then, combining~(A1), \eqref{two_ineq:mat_H}, and~\eqref{two_ineq:mat_H2} implies
\begin{align}
& \left\Vert ( \nabla^{2} f(x_{k}) + \mu_{k} \Vert d_{k} \Vert^{p-2} I )^{-1} \nabla f(x_{k}) \right\Vert \nonumber
\\
&\leq \left\Vert (\nabla^{2} f(x_{k}) + \mu_{k} \Vert d_{k} \Vert^{p-2} I)^{-1} \right\Vert \Vert \nabla f(\widehat{x}_{k}) - \nabla f(x_{k}) - \nabla^{2} f(x_{k}) (\widehat{x}_{k} - x_{k}) \Vert \nonumber
\\
& \hspace{35mm} + \left\Vert (\nabla^{2} f(x_{k}) + \mu_{k} \Vert d_{k} \Vert^{p-2} I)^{-1} \nabla^{2} f(x_{k}) \right\Vert \Vert \widehat{x}_{k} - x_{k} \Vert  \nonumber
\\
&\leq \frac{c_{1}}{2\mu_{k}\Vert d_{k} \Vert^{p-2}} {\rm dist}(x_{k}, X^{\ast})^{2} + {\rm dist}(x_{k}, X^{\ast}). \label{ineq:nablaf_dist2}
\end{align}
By \eqref{ineq:dist_r}, \eqref{ineq:dk_dist1} and \eqref{ineq:nablaf_dist2}, we get $\Vert d_{k} \Vert \leq \frac{u_{3}}{\mu_{k} \Vert d_{k} \Vert^{p-2}} {\rm dist}(x_{k}, X^{\ast})^{\frac{p+1}{2}} + {\rm dist}(x_{k}, X^{\ast})$, where $u_{3} \coloneqq \frac{1}{2}c_{1}r_{2}^{ \frac{3-p}{2} } + \sqrt{n} c_{2} u_{2}^{ \frac{p+1}{2} }$.
This inequality can be reformulated as $\mu_{k} \Vert d_{k} \Vert^{p-1} \leq 2 \max \{ u_{3} {\rm dist}(x_{k}, X^{\ast})^{\frac{p+1}{2}}, \mu_{k} \Vert d_{k} \Vert^{p-2} {\rm dist}(x_{k}, X^{\ast}) \}$. There are two possible cases: (i) $\mu_{k} \Vert d_{k} \Vert^{p-1} \leq 2u_{3} {\rm dist}(x_{k}, X^{\ast})^{\frac{p+1}{2}}$; (ii) $\mu_{k} \Vert d_{k} \Vert^{p-1} \leq 2\mu_{k} \Vert d_{k} \Vert^{p-2} {\rm dist}(x_{k}, X^{\ast})$. In case~(i), utilizing (A2) and (A7) derives
\begin{align*}
\frac{c_{1}^{\frac{p-1}{2}}{\rm dist}(x_{k}, X^{\ast})^{\frac{3-p}{2}}}{m_{1}^{\frac{3-p}{2}}} \Vert d_{k} \Vert^{p-1} \leq \mu_{k} \Vert d_{k} \Vert^{p-1} \leq 2u_{3} {\rm dist}(x_{k}, X^{\ast})^{\frac{p+1}{2}}.
\end{align*}
Hence, we have $\Vert d_{k} \Vert \leq c_{1}^{-\frac{1}{2}}(2u_{3}m_{1}^{\frac{3-p}{2}})^{\frac{1}{p-1}} {\rm dist}(x_{k}, X^{\ast}).$
Meanwhile, case~(ii) leads to $\Vert d_{k} \Vert \leq 2 {\rm dist}(x_{k}, X^{\ast})$. Thus, there exists $m_{2} > 0$ such that $\Vert d_{k} \Vert \leq m_{2} {\rm dist}(x_{k}, X^{\ast})$.
\par
Now, Lemma~\ref{lemma:solvable}, (A2), and \eqref{ineq:dist_mr} yield
\begin{align}
\Vert d_{k} \Vert
&\geq \frac{\Vert \nabla f(x_{k}) + \rho_{k} \eta_{k} \Vert}{\Vert \nabla^{2} f(x_{k}) + \mu_{k} \Vert d_{k} \Vert^{p-2} I \Vert} \nonumber
\\
&\geq \frac{1 - \sqrt{n}c_{2} (u_{2} r_{2})^{\frac{p-1}{2}} }{u_{1} + \mu_{k} \Vert d_{k} \Vert^{p-2}} \Vert \nabla f(x_{k}) \Vert \nonumber
\\
&\geq \frac{1}{2 (u_{1} + \mu_{k} \Vert d_{k} \Vert^{p-2})} \Vert \nabla f(x_{k}) \Vert, \label{ineq:d_nablaf}
\end{align}
where the last inequality follows from the definition of $r_{2}$. Exploiting~(A2) and $\Vert d_{k} \Vert \leq m_{2} {\rm dist}(x_{k}, X^{\ast})$ derives
\begin{align*}
\mu_{k} \Vert d_{k} \Vert^{p-2} \leq c_{1}^{\frac{p-1}{2}} \Vert \nabla f(x_{k}) \Vert^{\frac{3-p}{2}} m_{2}^{p-2} {\rm dist}(x_{k}, X^{\ast})^{p-2}.
\end{align*}
It then follows from \eqref{ineq:dist_r} and \eqref{ineq:dist_mr} that
\begin{align}
\mu_{k} \Vert d_{k} \Vert^{p-2} \leq c_{1}^{\frac{p-1}{2}} u_{2}^{\frac{3-p}{2}} m_{2}^{p-2} r_{2}^{\frac{p-1}{2}}. \label{ineq:mud_m}
\end{align}
By~\eqref{ineq:d_nablaf}, \eqref{ineq:mud_m}, and (A7), there exists $m_{3} > 0$ satisfying $\Vert d_{k} \Vert \geq m_{3} {\rm dist}(x_{k}, X^{\ast})$.
\par
Next, we prove item~(b). Let $x_{k+1} = x_{k} + d_{k} \in B(x^{\ast}, r_{2})$. Lemma~\ref{lemma:solvable} implies
\begin{align*}
\nabla f(x_{k+1}) &= (\nabla f(x_{k} + d_{k}) - \nabla f(x_{k}) - \nabla^{2} f(x_{k}) d_{k}) - (\mu_{k} \Vert d_{k} \Vert^{p-2} d_{k} + \rho_{k} \eta_{k}) \nonumber
\\
& \hspace{25.75mm} + ( \nabla f(x_{k}) + (\nabla^{2} f(x_{k}) + \mu_{k} \Vert d_{k} \Vert^{p-2} I)d_{k} + \rho_{k} \eta_{k} )  \nonumber
\\
&= (\nabla f(x_{k} + d_{k}) - \nabla f(x_{k}) - \nabla^{2} f(x_{k}) d_{k}) - (\mu_{k} \Vert d_{k} \Vert^{p-2} d_{k} + \rho_{k} \eta_{k}). 
\end{align*}
Then, we have from (A1), (A2), and (A7) that 
\begin{align} 
&\hspace{-1.3mm} {\rm dist}(x_{k+1}, X^{\ast}) \nonumber
\\
&\hspace{-1.3mm} \leq m_{1} \Vert \nabla f(x_{k} + d_{k}) \Vert \nonumber
\\
&\hspace{-1.3mm} \leq m_{1} \Vert \nabla f(x_{k} + d_{k}) - \nabla f(x_{k}) - \nabla^{2} f(x_{k}) d_{k} \Vert + m_{1} \mu_{k} \Vert d_{k} \Vert^{p-1} + \sqrt{n} m_{1}\rho_{k} \nonumber
\\
&\hspace{-1.3mm} \leq \frac{c_{1}m_{1}}{2} \Vert d_{k} \Vert^{2} + c_{1}^{\frac{p+1}{2}} m_{1} \Vert \nabla f(x_{k}) \Vert^{\frac{3-p}{2}} \Vert d_{k} \Vert^{p-1} + \sqrt{n} c_{2} m_{1} \Vert \nabla f(x_{k}) \Vert^{\frac{p+1}{2}}.  \label{ineq:distk_distk}
\end{align}
Now, recall that $\Vert d_{k} \Vert \leq m_{2} {\rm dist}(x_{k}, X^{\ast})$, $\Vert \nabla f(x_{k}) \Vert \leq u_{2} {\rm dist}(x_{k}, X^{\ast})$, and ${\rm dist}(x_{k}, X^{\ast}) \leq r_{2}$, where the second and third inequalities follow from~\eqref{ineq:nabla_f_dist} and~\eqref{ineq:dist_r}, respectively. Thus, we can easily verify that
\begin{align}
&\hspace{-1.3mm} \frac{c_{1}m_{1}}{2} \Vert d_{k} \Vert^{2} + c_{1}^{\frac{p+1}{2}} m_{1} \Vert \nabla f(x_{k}) \Vert^{\frac{3-p}{2}} \Vert d_{k} \Vert^{p-1} + \sqrt{n} c_{2} m_{1} \Vert \nabla f(x_{k}) \Vert^{\frac{p+1}{2}} \nonumber
\\
&\hspace{-1.3mm} \leq \left( \frac{c_{1}m_{1}m_{2}^{2}r_{2}^{\frac{3-p}{2}}}{2} + c_{1}^{\frac{p+1}{2}} u_{2}^{\frac{3-p}{2}} m_{2}^{p-1} + \sqrt{n} c_{2} m_{1} u_{2}^{\frac{p+1}{2}} \right) {\rm dist}(x_{k}, X^{\ast})^{\frac{p+1}{2}}. \label{ineq:distk_distk2}
\end{align}
Therefore, combining~\eqref{ineq:distk_distk} and \eqref{ineq:distk_distk2} guarantees the existence of $m_{4} > 0$ satisfying ${\rm dist}(x_{k+1}, X^{\ast}) \leq m_{4} {\rm dist}(x_{k}, X^{\ast})^{\frac{p+1}{2}}$.
\end{proof}

\noindent
Finally, we establish local and superlinear convergence of Algorithm~\ref{algo:GRNM}. Although we can prove the theorem using the above lemmas in a manner similar to~\cite[Theorem~3.2]{LiFuQiYa04}, the proof is given in Appendix~\ref{app:proof} for completeness of the paper.

\begin{theorem} \label{th:local}
Suppose that {\rm (A1), (A2), and (A7)} hold. If an initial point $x_{0}$ is chosen sufficiently close to $x^{\ast}$, then any sequence $\{ x_{k} \}$ generated by Algorithm~{\rm \ref{algo:GRNM}} converges to some global optimum $\bar{x} \in X^{\ast}$ superlinearly. Moreover, if $p=3$ is satisfied, then $\{ x_{k} \}$ converges to $\bar{x} \in X^{\ast}$ quadratically.
\end{theorem}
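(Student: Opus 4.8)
The plan is to derive both the convergence and its rate from Lemma~\ref{lemma:d_dist} alone, the crux being to show that a sequence started close enough to $x^{\ast}$ never leaves the ball $B(x^{\ast},r_{2})$ on which that lemma is valid. (If $\nabla f(x_{k})=0$ for some $k$ the sequence is eventually stationary at a point of $X^{\ast}$, so we may assume, as in Section~\ref{sec:global_convergence} with $\varepsilon=0$, that $\nabla f(x_{k})\ne 0$ for all $k$, and then $d_{k}$ is well defined by Lemma~\ref{lemma:solvable} under (A2).) Since $p\in(1,3]$, the exponent $\frac{p+1}{2}$ in Lemma~\ref{lemma:d_dist}(b) is strictly larger than $1$ (and equals $2$ exactly when $p=3$), so that estimate is a genuine superlinear contraction once ${\rm dist}(x_{k},X^{\ast})$ is small. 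Accordingly, I would fix $q\in(0,1)$, say $q=\tfrac12$, choose $\delta_{0}>0$ small enough that $m_{4}\delta_{0}^{(p-1)/2}\le q$ and $\delta_{0}\bigl(1+\frac{m_{2}}{1-q}\bigr)\le r_{2}$, and then require the initial point to satisfy $\|x_{0}-x^{\ast}\|\le\delta_{0}$ (so in particular ${\rm dist}(x_{0},X^{\ast})\le\delta_{0}$ and $x_{0}\in B(x^{\ast},r_{2})$).

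The heart of the proof is an induction establishing, for every $k\ge 0$, that (i)~$x_{k}\in B(x^{\ast},r_{2})$ and (ii)~${\rm dist}(x_{k},X^{\ast})\le\delta_{k}$, where $\delta_{k+1}\coloneqq m_{4}\delta_{k}^{(p+1)/2}$; the choice of $\delta_{0}$ forces $\delta_{k+1}\le q\,\delta_{k}$, hence $\delta_{k}\le q^{k}\delta_{0}$. Assuming (i)--(ii) for $0,\dots,k$, Lemma~\ref{lemma:d_dist}(a) gives $\|d_{j}\|\le m_{2}\,{\rm dist}(x_{j},X^{\ast})\le m_{2}\delta_{j}$ for $j\le k$, whence
\[
\|x_{k+1}-x^{\ast}\|\le\|x_{0}-x^{\ast}\|+\sum_{j=0}^{k}\|d_{j}\|\le\delta_{0}+m_{2}\sum_{j=0}^{\infty}q^{j}\delta_{0}=\delta_{0}\Bigl(1+\tfrac{m_{2}}{1-q}\Bigr)\le r_{2},
\]
so $x_{k+1}\in B(x^{\ast},r_{2})$, which is (i) at $k+1$; now Lemma~\ref{lemma:d_dist}(b), applied to the pair $x_{k},x_{k+1}\in B(x^{\ast},r_{2})$, yields ${\rm dist}(x_{k+1},X^{\ast})\le m_{4}\,{\rm dist}(x_{k},X^{\ast})^{(p+1)/2}\le m_{4}\delta_{k}^{(p+1)/2}=\delta_{k+1}$, which is (ii) at $k+1$.

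With the induction in hand, $\sum_{k}\|d_{k}\|\le m_{2}\sum_{k}\delta_{k}\le m_{2}\delta_{0}/(1-q)<\infty$, so $\{x_{k}\}$ is Cauchy and converges to some $\bar{x}$; since ${\rm dist}(x_{k},X^{\ast})\le\delta_{k}\to 0$ and $X^{\ast}$ is closed, $\bar{x}\in X^{\ast}$. For the rate, observe first that (i)--(ii) and Lemma~\ref{lemma:d_dist}(b) give ${\rm dist}(x_{j+1},X^{\ast})\le m_{4}\delta_{j}^{(p-1)/2}{\rm dist}(x_{j},X^{\ast})\le q\,{\rm dist}(x_{j},X^{\ast})$, hence ${\rm dist}(x_{j},X^{\ast})\le q^{\,j-(k+1)}{\rm dist}(x_{k+1},X^{\ast})$ for $j\ge k+1$; then, using $\bar{x}=x_{k+1}+\sum_{j\ge k+1}d_{j}$ together with Lemma~\ref{lemma:d_dist}(a),
\[
\|x_{k+1}-\bar{x}\|\le\sum_{j\ge k+1}\|d_{j}\|\le m_{2}\sum_{j\ge k+1}{\rm dist}(x_{j},X^{\ast})\le\frac{m_{2}}{1-q}\,{\rm dist}(x_{k+1},X^{\ast})\le\frac{m_{2}m_{4}}{1-q}\,{\rm dist}(x_{k},X^{\ast})^{\frac{p+1}{2}}\le\frac{m_{2}m_{4}}{1-q}\,\|x_{k}-\bar{x}\|^{\frac{p+1}{2}},
\]
the last step because $\bar{x}\in X^{\ast}$ gives ${\rm dist}(x_{k},X^{\ast})\le\|x_{k}-\bar{x}\|$. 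Since $\frac{p+1}{2}>1$ and $\|x_{k}-\bar{x}\|\to0$, convergence is superlinear, and when $p=3$ this reads $\|x_{k+1}-\bar{x}\|\le\frac{m_{2}m_{4}}{1-q}\|x_{k}-\bar{x}\|^{2}$, i.e.\ quadratic. The main obstacle is exactly the bookkeeping in the induction: Lemma~\ref{lemma:d_dist}(b) requires \emph{both} $x_{k}$ and $x_{k+1}$ in $B(x^{\ast},r_{2})$, so one must establish $x_{k+1}\in B(x^{\ast},r_{2})$ from the summed step bound before invoking the contraction, and the delicate point is to pick $\delta_{0}$ small enough that the contraction condition $m_{4}\delta_{0}^{(p-1)/2}\le q$ and the containment condition $\delta_{0}(1+m_{2}/(1-q))\le r_{2}$ hold at once.
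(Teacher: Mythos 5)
Your proof is correct and follows essentially the same route as the paper's: an induction that keeps the iterates in the ball where Lemma~\ref{lemma:d_dist} applies, geometric summability of the steps to get a Cauchy sequence with limit in $X^{\ast}$, and then the order-$\frac{p+1}{2}$ contraction from Lemma~\ref{lemma:d_dist}(b) for the rate. The only substantive (and mildly simplifying) difference is that you track ${\rm dist}(x_{k}, X^{\ast})$ via the majorizing sequence $\delta_{k}$ and close the rate estimate with ${\rm dist}(x_{k}, X^{\ast}) \leq \Vert x_{k} - \bar{x} \Vert$, so you never need the lower bound $\Vert d_{k} \Vert \geq m_{3}\,{\rm dist}(x_{k}, X^{\ast})$ that the paper's version of the argument uses.
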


\section{Concluding remarks} \label{sec:conclusion}
In this paper, we have proposed Algorithm~\ref{algo:GRNM}, which is an RNM with generalized regularization terms. The proposed method is based on the RNM proposed by Mishchenko~\cite{Mi23}, but it is a generalization of the existing one regarding regularization. Therefore, not only the quadratic and cubic RNMs but also novel RNMs with other regularization, such as the elastic net, are included in Algorithm~\ref{algo:GRNM}. We have proven global $\mathcal{O}(k^{-2})$ and local superlinear convergence of~Algorithm~\ref{algo:GRNM}.
\par
One of future research is to propose an accelerated GRNM that globally converges in the order of ${\cal O}(k^{-3})$.


\section*{Declarations}
\noindent
{\bf Conflict of interest}~~The authors declare no conflicts of interest.

\bibliographystyle{elsarticle-num}
\bibliography{reference}

\appendix
\makeatletter
\setcounter{equation}{0}
\renewcommand{\theequation}{A.\arabic{equation}}

\section{} \label{app:param}
In this appendix, we show that the parameters described in Examples~\ref{ex:1} and~\ref{ex:2} satisfy assumptions~(A4)--(A6) stated in Theorem~\ref{th:global}.
\par
We now discuss Example~\ref{ex:1}. It can be verified that 
\begin{gather*}
q = 0, ~~ 3 - (1 + q)^{\frac{3-p}{p-1}} = 3, ~~ 1 + 2 q + (1 + q)^{\frac{2}{p-1}} = 2, 
\\
\left( 1 + 2 q + (1 + q)^{\frac{2}{p-1}} \right) \theta = \frac{3}{4}.
\end{gather*}
Thus, we can easily verify that assumptions~(A4) and (A5) hold. Moreover, assumption (A6) is obtained from
\begin{align*}
&\theta^{k} = \left( \frac{3}{8} \right)^{k} = {\cal O}(k^{-2}) ~~ (k \to \infty),
\\
&\left( \left( 1 + 2 q + (1 + q)^{\frac{2}{p-1}} \right) \theta \right)^{\frac{k}{2}} = \left( \frac{3}{4} \right)^{\frac{k}{2}} = {\cal O}(k^{-2}) ~~ (k \to \infty).
\end{align*}
\par
Next, we consider Example~\ref{ex:2}. In this case, the parameter $q$ depends on $p \in (1,3]$, and hence we denote $q = q(p)$. Moreover, we use the following notation:
\begin{align*}
s(p) \coloneqq 3 - (1 + q(p))^{\frac{3-p}{p-1}}, \quad t(p) \coloneqq 1 + 2 q(p) + (1 + q(p))^{\frac{2}{p-1}}.
\end{align*}
For any $x \in (1,3)$, we define
\begin{align*}
& q_{1}(x) \coloneqq \frac{1}{10} ( 2^{\frac{x-1}{3-x}} - 1 ) > 0, && t_{1}(x) \coloneqq 1 + 2q_{1}(x) + ( 1 + q_{1}(x) )^{\frac{2}{x-1}},
\\
& q_{2}(x) \coloneqq \frac{1}{20} 2^{\frac{3-x}{x-1}} > 0, && t_{2}(x) \coloneqq 1 + 2q_{2}(x) + ( 1 + q_{2}(x) )^{\frac{2}{x-1}}.
\end{align*}
Recall that $q(p)$ and $t(p)$ can be represented as follows:
\begin{align} \label{def:alpha_gamma}
q(p) = \left\{
\begin{aligned}
& q_{1}(p) && {\rm if~} p \in (1,2],
\\
& q_{2}(p) && {\rm if~} p \in (2,3],
\end{aligned}
\right. \qquad t(p) = \left\{
\begin{aligned}
& t_{1}(p) && {\rm if~} p \in (1,2],
\\
& t_{2}(p) && {\rm if~} p \in (2,3].
\end{aligned}
\right.
\end{align}
By the definitions of $q_{1}$ and $q_{2}$, we obtain
\begin{align} \label{def:derivative_alpha}
\begin{aligned}
& \frac{d}{d p}q_{1}(p) = \frac{1}{10 (3-p)^2} 2^{\frac{2}{3-p}} \log 2 > 0 \quad \forall p \in (1,2],
\\
& \frac{d}{d p}q_{2}(p) = -\frac{1}{20 (p-1)^2} 2^{\frac{2}{p-1}} \log 2 < 0 \quad \forall p \in (2,3].
\end{aligned}
\end{align}
Thus, the first equality of~\eqref{def:alpha_gamma} implies that $q$ is monotonically increasing for $p \in (1,2]$ and is monotonically decreasing for $p \in (2,3]$. Thus, using $\theta = \frac{1}{5}$ yields 
\begin{align}
0 < q(p) \leq q(2) = \frac{1}{10} < \theta < 1 \quad \forall p \in (1,3]. \label{ineq:alpha_p}
\end{align}
Since $q(p) = \min \{ q_{1}(p), q_{2}(p) \} \leq q_{1}(p) = \frac{1}{10} (2^{\frac{p-1}{3-p}} - 1) < 2^{\frac{p-1}{3-p}} - 1$ for $p \in (1,3]$, we have 
\begin{align}
s(p) = 3 - (1 + q(p))^{\frac{3-p}{p-1}} > 3 - 2 = 1 > 0 \quad \forall p \in (1,3]. \label{ineq:beta_p}
\end{align}
Noting $\frac{2}{p-1} \geq 1$ and $q(p) > 0$ derives
\begin{align}
t(p) = 1 + 2 q(p) + (1 + q(p))^{\frac{2}{p-1}} \geq 2 + 3q(p) \geq 1 \quad \forall p \in (1,3]. \label{ineq:gamma_p}
\end{align}
Utilizing~\eqref{def:derivative_alpha} implies
\begin{align*}
\begin{aligned}
&\frac{d}{d p} t_{1}(p) = \left( 2 + \frac{2}{p-1} (1 + q_{1}(p))^{\frac{3-p}{p-1}} \right) \frac{d}{dp}q_{1}(p) > 0 \quad \forall p \in (1,2],
\\
&\frac{d}{d p} t_{2}(p) = \left( 2 + \frac{2}{p-1} (1 + q_{2}(p))^{\frac{3-p}{p-1}} \right) \frac{d}{dp}q_{2}(p) < 0 \quad \forall p \in (2,3].
\end{aligned}
\end{align*}
Hence, the second equality of~\eqref{def:alpha_gamma} derives $t(p) \leq t(2) = \frac{241}{100}$ for $p \in (1,3]$. It then follows from~\eqref{ineq:gamma_p} and $\theta = \frac{1}{5}$ that
\begin{align}
0 < \frac{1}{5} \leq \theta t(p) \leq \frac{241}{500} < 1. \label{ineq:delta_gamma_p}
\end{align}
Therefore, the assumptions are ensured by~\eqref{ineq:alpha_p}, \eqref{ineq:beta_p}, \eqref{ineq:gamma_p}, and \eqref{ineq:delta_gamma_p}.

\makeatletter
\setcounter{equation}{0}
\renewcommand{\theequation}{B.\arabic{equation}}
\section{} \label{app:proof}
This appendix provides the proof of Theorem~\ref{th:local}.
\begin{proof}
We define $r_{3}$ and $r_{4}$ as follows:
\begin{align*}
r_{3} \coloneqq \min \left\{ r_{2}, \left( \frac{m_{3}}{3m_{2} m_{4}} \right)^{\frac{2}{p-1}} \right\}, ~~ r_{4} \coloneqq \frac{1}{2+m_{2}} \min \left\{ r_{3}, \left(\frac{m_{3}}{3m_{2}m_{4}}\right)^{\frac{2}{p-1}} \right\},
\end{align*}
where $r_{2}$, $m_{2}$, $m_{3}$, and $m_{4}$ are positive constants described in Lemma~\ref{lemma:d_dist}. Assume that the initial point $x_{0}$ is selected from $B(x^{\ast}, r_{4})$, thus it satisfies $\Vert x_{0} - x^{\ast} \Vert \leq r_{4}$.
\par
The proof is divided into two parts: The former part will prepare two inequalities regarding $\Vert d_{k} \Vert$, and the latter part will prove fast convergence of $\{ x_{k} \}$ using those inequalities. We first show
\begin{align}
\Vert d_{k} \Vert \leq \frac{1}{3} \Vert d_{k-1} \Vert, \quad \Vert d_{k} \Vert \leq \frac{r_{4} m_{2}}{3^{k}} \quad \forall k \in \mathbb{N}. \label{ineq:two_dk}
\end{align}
Let $k \in \mathbb{N}$ be arbitrary. Using (a) and (b) of Lemma~\ref{lemma:d_dist} yields
\begin{align}
\Vert d_{k} \Vert \leq m_{2} m_{4} {\rm dist}(x_{k-1}, X^{\ast})^{\frac{p+1}{2}} \leq \frac{m_{2} m_{4}}{m_{3}} r_{3}^{\frac{p-1}{2}} \Vert d_{k-1} \Vert \leq \frac{1}{3} \Vert d_{k-1} \Vert, \label{ineq:dk_frac12}
\end{align}
where note that the last inequality follows from the definition of $r_{3}$. Now, let us show that
\begin{align} \label{d_converges_0}
x_{\ell} \in B(x^{\ast}, r_{3}) \,~ \forall \ell \in \{ 0,1, \ldots, k \} \,~ \Longrightarrow \,~ \Vert d_{\ell} \Vert \leq \frac{r_{4} m_{2}}{3^{\ell}} \,~ \forall \ell \in \{ 0,1, \ldots, k \}.
\end{align}
From \eqref{ineq:dk_frac12} and (a) of Lemma~\ref{lemma:d_dist}, we have
\begin{align*}
& \Vert d_{0} \Vert \leq m_{2} {\rm dist}(x_{0}, X^{\ast}) \leq m_{2} \Vert x_{0} - x^{\ast} \Vert \leq r_{4} m_{2},
\\
& \Vert d_{\ell} \Vert \leq \frac{1}{3} \Vert d_{\ell-1} \Vert \leq \cdots \leq \frac{1}{3^{\ell}} \Vert d_{0} \Vert \leq \frac{r_{4} m_{2}}{3^{\ell}} \quad \forall \ell \in \{ 1, 2, \ldots, k \},
\end{align*}
namely, \eqref{d_converges_0} can be verified. 
\par
From now on, we prove by mathematical induction that $x_{k} \in B(x^{\ast}, r_{3})$ for all $k \in \mathbb{N}$. Let us consider the case where $k=1$. Item~(a) of Lemma~\ref{lemma:d_dist} implies $\Vert x_{1} - x^{\ast} \Vert \leq \Vert x_{0} - x^{\ast} \Vert + \Vert d_{0} \Vert \leq r_{4} + m_{2} {\rm dist}(x_{0}, X^{\ast}) \leq r_{4} (1+m_{2}) \leq r_{3}$. Next, let $k \in \mathbb{N}$ be arbitrary, and we assume that $x_{j} \in B(x^{\ast}, r_{3})$ for $j \in \{ 0,1, \ldots, k \}$. By \eqref{d_converges_0} and item~(a) of Lemma~\ref{lemma:d_dist}, we obtain
\begin{align*}
\Vert x_{k+1} - x^{\ast} \Vert 
&\leq \Vert x_{k} - x^{\ast} \Vert + \Vert d_{k} \Vert \leq \cdots \leq \Vert x_{0} - x^{\ast} \Vert + \sum_{\ell=0}^{k} \Vert d_{\ell} \Vert 
\\
&\leq r_{4} + \frac{r_{4} m_{2}}{2} \left( 1 - \frac{1}{3^{k+1}}  \right) \leq \frac{2 + m_{2}}{2} r_{4} \leq r_{3}.
\end{align*}
Thus, we verify that $x_{k} \in B(x^{\ast}, r_{3})$ for $k \in \mathbb{N}$. It then follows from~\eqref{d_converges_0} that $\Vert d_{k} \Vert \leq \frac{r_{4} m_{2}}{3^{k}}$ for $k \in \mathbb{N}$, namely, the disired inequalities of~\eqref{ineq:two_dk} are proven.
\par
The second part shows the local fast convergence of $\{ x_{k} \}$. We arbitrarily take $i \in \mathbb{N}$ and $j \in \mathbb{N}$ with $i \gg j$. Using \eqref{ineq:two_dk} yields
\begin{align*}
\Vert x_{i} - x_{j} \Vert
&\leq \Vert x_{i-1} - x_{j} \Vert + \Vert d_{i-1} \Vert \leq \Vert x_{i-2} - x_{j} \Vert + \sum_{\ell=i-2}^{i-1} \Vert d_{\ell} \Vert
\\
&\leq \cdots \leq \sum_{\ell=j}^{i-1} \Vert d_{\ell} \Vert \leq r_{4} m_{2} \sum_{\ell=j}^{i-1} \frac{1}{3^{\ell}} = r_{4} m_{2} \left( \frac{1}{3^{j-1}} - \frac{1}{3^{i-1}}  \right) \leq \frac{r_{4} m_{2}}{3^{j-1}}.
\end{align*}
This fact implies that $\{ x_{k} \}$ is a Cuachy sequence, that is, there exists $\bar{x} \in \mathbb{R}^{n}$ such that $x_{k} \to \bar{x}$ as $k \to \infty$. Meanwhile, it follows from (a) of Lemma~\ref{lemma:d_dist} and \eqref{ineq:two_dk} that $\{ {\rm dist}(x_{k}, X^{\ast}) \}$ converges to zero. We note that $\Vert \widehat{x}_{k} \Vert \leq \Vert \widehat{x}_{k} - x_{k} \Vert + \Vert x_{k} \Vert = {\rm dist}(x_{k}, X^{\ast}) + \Vert x_{k} \Vert$, namely, $\{ \widehat{x}_{k} \}$ is bounded. Hence, there exist $\widetilde{x} \in X^{\ast}$ and ${\cal K} \subset \mathbb{N}$ such that $\widehat{x}_{k} \to \widetilde{x}$ as ${\cal K} \ni k \to \infty$. These facts imply that $\Vert \bar{x} - \widetilde{x} \Vert \leq \Vert x_{k} - \bar{x} \Vert + {\rm dist}(x_{k}, X^{\ast}) + \Vert \widehat{x}_{k} - \widetilde{x} \Vert \to 0$ as ${\cal K} \ni k \to \infty$, that is, $\{ x_{k} \}$ converges to some global optimum $\bar{x} = \widetilde{x} \in X^{\ast}$.
\par
Hereinafter, we show that $\{ x_{k} \}$ converges to $\bar{x}$ superlinearly. Combining~items (a) and (b) of Lemma~\ref{lemma:d_dist} derives 
\begin{align}
\Vert d_{j+1} \Vert \leq m_{2} {\rm dist}(x_{j+1}, X^{\ast}) \leq m_{2} m_{4} {\rm dist}(x_{j}, X^{\ast})^{\frac{p+1}{2}} \leq \frac{ m_{2} m_{4} }{m_{3}^{\frac{p+1}{2}}} \Vert d_{j} \Vert^{\frac{p+1}{2}}. \label{ineq:quad_dk}
\end{align}
Since the first inequality of~\eqref{ineq:two_dk} holds, it can be verfied that
\begin{align*}
& \Vert d_{\ell} \Vert \leq \frac{1}{3^{\ell - j - 1}} \Vert d_{j+1} \Vert \quad \forall \ell \in \{ j+1, j+2, \ldots, i-1 \},
\\
& \Vert d_{\ell} \Vert \leq \frac{1}{3^{\ell - j}} \Vert d_{j} \Vert \quad \forall \ell \in \{ j+1, j+2, \ldots, i-1 \}.
\end{align*}
By these inequalities, we obtain
\begin{align*}
\Vert x_{j+1} - x_{i} \Vert
&= \left\Vert \sum_{\ell=j+1}^{i-1} d_{\ell} \right\Vert \leq \sum_{\ell=j+1}^{i-1} \Vert d_{\ell} \Vert
\\
& \sum_{\ell=j+1}^{i-1} \frac{1}{3^{\ell-j-1}} \Vert d_{j+1} \Vert = \frac{3}{2} \left( 1 - \frac{1}{3^{i-j-1}} \right) \Vert d_{j+1} \Vert \leq 2 \Vert d_{j+1} \Vert,
\end{align*}
and
\begin{align*}
\Vert x_{j} - x_{i} \Vert 
&= \left\Vert \sum_{\ell=j}^{i-1} d_{\ell} \right\Vert \geq \Vert d_{j} \Vert - \sum_{\ell=j+1}^{i-1} \Vert d_{\ell} \Vert 
\\
&\geq \left( 1 - \sum_{\ell=j+1}^{i-1} \frac{1}{3^{\ell-j}}  \right) \Vert d_{j} \Vert = \frac{1}{2} \left( 1 + \frac{1}{3^{i-j-1}} \right) \Vert d_{j} \Vert \geq \frac{1}{2} \Vert d_{j} \Vert.
\end{align*}
Hence taking the limit $i \to \infty$ implies
\begin{align}
\Vert x_{j+1} - \bar{x} \Vert \leq 2 \Vert d_{j+1} \Vert, \quad \Vert d_{j} \Vert \leq 2 \Vert x_{j} - \bar{x} \Vert. \label{ineq:quad_xk}
\end{align}
Exploiting \eqref{ineq:quad_dk} and \eqref{ineq:quad_xk} derives
\begin{align*}
\Vert x_{j+1} - \bar{x} \Vert \leq \frac{ 2^{\frac{p+3}{2}} m_{2} m_{4} }{m_{3}^{\frac{p+1}{2}}} \Vert x_{j} - \bar{x} \Vert^{\frac{p+1}{2}}.
\end{align*}
Therefore, from $\frac{p+1}{2} \in (1,2]$, the sequence $\{ x_{k} \}$ converges to $\bar{x}$ superlinearly. Moreover, if $p=3$ holds, then $\frac{p+1}{2} = 2$, that is, the rate of convergence is quadratic.
\end{proof}

\end{document}